\documentclass[sn-mathphys.bst,Numbered]{sn-jnl}% Math and Physical Sciences Reference Style
%%\documentclass[sn-aps]{sn-jnl}% American Physical Society (APS) Reference Style
%%\documentclass[sn-vancouver,Numbered]{sn-jnl}% Vancouver Reference Style
%%\documentclass[sn-apa]{sn-jnl}% APA Reference Style 
%%\documentclass[sn-chicago]{sn-jnl}% Chicago-based Humanities Reference Style
%%\documentclass[default]{sn-jnl}% Default
%%\documentclass[default,iicol]{sn-jnl}% Default with double column layout

%%%% Standard Packages
%%<additional latex packages if required can be included here>

\usepackage{graphicx}%
\usepackage{multirow}%
\usepackage{amsmath,amssymb,amsfonts}%
\usepackage{amsthm}%
\usepackage{mathrsfs}%
\usepackage[title]{appendix}%
\usepackage{xcolor}%
\usepackage{textcomp}%
\usepackage{manyfoot}%
\usepackage{booktabs}%
\usepackage{algorithm}%
\usepackage{algorithmicx}%
\usepackage{algpseudocode}%
\usepackage{listings}%
%%%%

\DeclareMathOperator{\dist}{dist}

\theoremstyle{plain} 
\newtheorem{thm}{Theorem}[section] % The section option numbers the theorems by section.  Without it the theorems are numbered 1,2,3,...
\newtheorem{lem}{Lemma}[section]
\newtheorem{prop}{Proposition}[section]
\newtheorem{cor}{Corollary}[section]

\theoremstyle{definition}
\newtheorem{dfn}[thm]{Definition} % The thm command is optional, but without it, the definitions will be numbered independently of the theorems.

\newtheorem{rmk}[thm]{Remark}

%%%%%=============================================================================%%%%
%%%%  Remarks: This template is provided to aid authors with the preparation
%%%%  of original research articles intended for submission to journals published 
%%%%  by Springer Nature. The guidance has been prepared in partnership with 
%%%%  production teams to conform to Springer Nature technical requirements. 
%%%%  Editorial and presentation requirements differ among journal portfolios and 
%%%%  research disciplines. You may find sections in this template are irrelevant 
%%%%  to your work and are empowered to omit any such section if allowed by the 
%%%%  journal you intend to submit to. The submission guidelines and policies 
%%%%  of the journal take precedence. A detailed User Manual is available in the 
%%%%  template package for technical guidance.
%%%%%=============================================================================%%%%

%\jyear{2021}%

%% as per the requirement new theorem styles can be included as shown below
\theoremstyle{thmstyleone}%
%  meant for continuous numbers
%%\newtheorem{theorem}{Theorem}[section]% meant for sectionwise numbers
%% optional argument [theorem] produces theorem numbering sequence instead of independent numbers for Proposition
% 
%%\newtheorem{proposition}{Proposition}% to get separate numbers for theorem and proposition etc.

\theoremstyle{thmstyletwo}%

\theoremstyle{thmstylethree}%

\raggedbottom
%%\unnumbered% uncomment this for unnumbered level heads

\begin{document}

\title[Extremal Growth of Multiple Toeplitz Operators]{Extremal Growth of Multiple Toeplitz Operators and Applications to Numerical Stability of Approximation Schemes}

%%=============================================================%%
%% Prefix	-> \pfx{Dr}
%% GivenName	-> \fnm{Joergen W.}
%% Particle	-> \spfx{van der} -> surname prefix
%% FamilyName	-> \sur{Ploeg}
%% Suffix	-> \sfx{IV}
%% NatureName	-> \tanm{Poet Laureate} -> Title after name
%% Degrees	-> \dgr{MSc, PhD}
%% \author*[1,2]{\pfx{Dr} \fnm{Joergen W.} \spfx{van der} \sur{Ploeg} \sfx{IV} \tanm{Poet Laureate} 
%%                 \dgr{MSc, PhD}}\email{iauthor@gmail.com}
%%=============================================================%%

\author*[1]{\fnm{Yash} \sur{Rastogi}}\email{yrastogi@uchicago.edu}

\affil*[1]{\orgdiv{Department of Mathematics}, \orgname{University of Chicago}, \orgaddress{\street{5734 S. University Avenue}, \city{Chicago}, \postcode{60637}, \state{IL}, \country{USA}}}

%%==================================%%
%% sample for unstructured abstract %%
%%==================================%%

\abstract{The conversion of resolvent conditions into semigroup estimates is crucial in the stability analysis of hyperbolic partial differential equations. For two families of multiple Toeplitz operators, we relate the power bound with a resolvent condition of Kreiss-Ritt type. Furthermore, we show that the power bound is bounded above by a polynomial of the resolvent condition. The operators under investigation do not fall into a well-understood class, so our analysis utilizes explicit reproducing kernel techniques. Our methods apply \textit{mutatis mutandis} to composites of Toeplitz operators with polynomial symbol, which arise frequently in the numerical solution of initial value problems encountered in science and engineering.}

\keywords{Toeplitz operator, Hardy space, Kreiss Matrix Theorem, Numerical stability}

%%\pacs[JEL Classification]{D8, H51}

\pacs[MSC Classification]{46E22, 47B35, 65M12, 65L20}

\maketitle

\section*{Statements and Declarations}
\begin{itemize}
\item Funding: This work was supported by NSF grant DMS-1851843.
\item Competing interests: The author has no financial or proprietary interests in any material discussed in this article.
\end{itemize}

\section{Introduction}
A key problem in the stability theory of numerical methods for the solution of hyperbolic partial differential equations is deriving semigroup estimates from resolvent-type estimates \cite{LN91}. The stability theory, due to Gustafsson, Kreiss, and Sundstrom (GKS), relies on the Laplace transform with respect to the time variable and consequently the associated stability estimates are limited to no initial data \cite{GKS72}. Therefore, a major thrust of research is to incorporate nonzero initial data into these GKS estimates, which are resolvent-type estimates, to produce semigroup estimates \cite{Tre84}. A fundamental tool in this analysis, and therefore well-posedness for Cauchy problems in the theory of partial differential equations, is the Kreiss Matrix Theorem, which provides the equivalence of uniform boundedness of semigroups and a particular resolvent estimate for a family of matrices. This issue is subtle as the GKS stability estimate is equivalent to a \textit{slightly stronger} version of the Kreiss resolvent condition, which is the following:
\begin{equation}\label{one}
    \sup_{z \in \mathbf{C}, |z|>1} (|z|-1) \|(zI-A)^{-1}\|_{\mathcal{L}(\ell^2(\mathbf{N}))} < + \infty
\end{equation}
where $T$ is some bounded operator on $\ell^2(\mathbf{N})$ that incorporates both the discretization of the hyperbolic
equation and the numerical boundary conditions \cite{Cou15}. In finite dimensions, the Kreiss Matrix Theorem asserts the equivalence of equation \ref{one} and power-boundedness of $A$, which is equivalent to deriving an optimal semigroup estimate. However, for bounded linear operators on a Banach space, condition \ref{one} implies only $\|A^n\| \in O(n)$ and therefore is insufficient to guarantee power-boundedness. In fact, the Hille-Yosida resolvent condition, stated below, is clearly stronger than that in \ref{one}, but is still insufficient to guarantee power-boundedness \cite{Cou15}:
\begin{equation}\label{two}
    \sup_{n \geq 1} \sup_{z \in \mathbf{C}, |z|>1} (|z|-1)^n \|(zI - A)^{-n}\|_{\mathcal{L}(\ell^2(\mathbf{N}))} < + \infty 
\end{equation}
An infinite-dimensional analogue of the Kreiss Matrix Theorem, proven by El-Fallah and Ransford as Corollary 1.3 of \cite{ER02} is the following:
\begin{thm}
Let $X$ be a complex Banach space and $A$ be a bounded linear operator on $X$. Suppose that $A$ satisfies a resolvent condition of the form 
\[\|(\lambda I - A)^{-1}\|\leq \frac{C}{\dist(\lambda, E)}\]
for some compact subset $E$ of the unit circle and some positive scalar $C$. If $E$ is finite, then 
\[\sup_{n \geq 0} \|A^n\| \leq \frac{e}{2}C^2 \# E\]
\end{thm}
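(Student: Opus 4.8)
The plan is to feed the resolvent hypothesis into a Cauchy integral formula for $A^n$, but only after one integration by parts that replaces the resolvent by its square. Squaring the resolvent costs a factor of $C$ — this is where the $C^2$ comes from — yet it is decisive, since it converts a boundary integral that grows like $\log n$ into one bounded uniformly in $n$. Concretely: since the resolvent exists for $|\lambda|>1$ we have $\sigma(A)\subseteq\overline{\mathbf D}$, so for each $\delta>0$ the circle $\Gamma_\delta=\{\,|\lambda|=1+\delta\,\}$ has winding number $1$ about $\sigma(A)$ and $A^n=\tfrac{1}{2\pi i}\oint_{\Gamma_\delta}\lambda^n R(\lambda)\,d\lambda$, with $R(\lambda)=(\lambda I-A)^{-1}$. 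On a neighbourhood of $\Gamma_\delta$ the map $\lambda\mapsto\lambda^{n+1}R(\lambda)$ is holomorphic, so integrating $\tfrac{d}{d\lambda}\bigl(\lambda^{n+1}R(\lambda)\bigr)=(n+1)\lambda^nR(\lambda)-\lambda^{n+1}R(\lambda)^2$ around $\Gamma_\delta$ and rearranging gives $A^n=\tfrac{1}{2\pi i(n+1)}\oint_{\Gamma_\delta}\lambda^{n+1}R(\lambda)^2\,d\lambda$; taking norms and using $\|R(\lambda)^2\|\le\|R(\lambda)\|^2\le C^2/\dist(\lambda,E)^2$,
\[
\|A^n\|\ \le\ \frac{C^2\,(1+\delta)^{n+1}}{2\pi\,(n+1)}\oint_{\Gamma_\delta}\frac{|d\lambda|}{\dist(\lambda,E)^2}.
\]

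Next I would estimate the boundary integral using finiteness of $E=\{\zeta_1,\dots,\zeta_m\}$, $m=\#E$. Partitioning $\Gamma_\delta$ into the $m$ (Voronoi) arcs on which a fixed $\zeta_j$ realises $\dist(\cdot,E)$, where $\dist(\lambda,E)=|\lambda-\zeta_j|$, gives
\[
\oint_{\Gamma_\delta}\frac{|d\lambda|}{\dist(\lambda,E)^2}\ \le\ \sum_{j=1}^{m}\oint_{\Gamma_\delta}\frac{|d\lambda|}{|\lambda-\zeta_j|^2}\ =\ m\,(1+\delta)\!\int_0^{2\pi}\!\!\frac{d\theta}{\bigl|(1+\delta)e^{i\theta}-1\bigr|^2}\ =\ \frac{2\pi m\,(1+\delta)}{\delta(2+\delta)},
\]
the last step being $\int_0^{2\pi}(a-b\cos\theta)^{-1}d\theta=2\pi(a^2-b^2)^{-1/2}$ with $a=(1+\delta)^2+1$, $b=2(1+\delta)$. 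Thus $\|A^n\|\le C^2 m(1+\delta)^{n+2}/\bigl((n+1)\delta(2+\delta)\bigr)$ for every $\delta>0$, and taking $\delta=1/(n+1)$ reduces this to $\bigl((n+2)^{n+2}/((n+1)^{n+1}(2n+3))\bigr)C^2m$, whose prefactor increases to $e/2$ and so is $\le e/2$ for all $n\ge1$. Since $C\ge1$ (as is seen by letting $|\lambda|\to\infty$ in the hypothesis) the case $n=0$ is trivial, so $\sup_{n\ge0}\|A^n\|\le\tfrac e2 C^2\#E$.

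The only non‑routine step is the integration by parts, and it is worth seeing why it is needed. Estimating the original Cauchy formula directly yields $\|A^n\|\le\tfrac{C(1+\delta)^{n+1}}{2\pi}\oint_{\Gamma_\delta}|d\lambda|/\dist(\lambda,E)$, whose integral is of order $m\log(1/\delta)$; with the natural choice $\delta\asymp1/n$ this gives only $\|A^n\|=O(Cm\log n)$, not even a bounded sequence. Passing to $R(\lambda)^2$ turns the integral into one of order $m/\delta$, and the compensating factor $1/(n+1)$ together with $\delta\asymp1/n$ then cancels the growth, at the price of the second power of $C$. (For infinite $E$ one would iterate this integration by parts a number of times governed by the covering numbers of $E$; a single pass suffices here because $E$ is finite.) What remains is the elementary verification that $(n+2)^{n+2}/\bigl((n+1)^{n+1}(2n+3)\bigr)\le e/2$ for all $n\ge1$.
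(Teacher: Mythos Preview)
The paper does not supply its own proof of this theorem: it is quoted verbatim as Corollary~1.3 of El-Fallah and Ransford \cite{ER02} and then used as a black box (in Proposition~5.1). So there is no ``paper's proof'' to compare against, only the original source.

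Your argument is correct and is, in fact, essentially the El-Fallah--Ransford proof for the finite-$E$ case: the Cauchy representation $A^n=\frac{1}{2\pi i}\oint\lambda^n R(\lambda)\,d\lambda$, one integration by parts using $R'=-R^2$ to obtain $A^n=\frac{1}{2\pi i(n+1)}\oint\lambda^{n+1}R(\lambda)^2\,d\lambda$, the bound $\|R(\lambda)\|^2\le C^2/\dist(\lambda,E)^2$, the Voronoi/nearest-point decomposition that reduces the integral to $m$ copies of the Poisson-type integral $\oint_{\Gamma_\delta}|\lambda-\zeta|^{-2}\,|d\lambda|=2\pi(1+\delta)/(\delta(2+\delta))$, and the choice $\delta=1/(n+1)$. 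The remaining elementary inequality can be written, with $x=n+1\ge 1$, as
\[
\frac{(1+1/x)^{x+1}}{2+1/x}\ \le\ \frac{e}{2},
\]
which follows since $(x+1)\log(1+1/x)=1+\tfrac{1}{2x}-\tfrac{1}{6x^2}+\cdots\le 1+\log\bigl(1+\tfrac{1}{2x}\bigr)$; alternatively one checks directly that the left side increases to $e/2$. Your handling of $n=0$ via $C\ge 1$ is also fine (and in fact the estimate at $n=0$ already gives $4/3<e/2$, so no separate argument is strictly needed). Your closing remark about why a single integration by parts suffices for finite $E$, and how iterated integrations handle general compact $E$, accurately reflects the structure of the full El-Fallah--Ransford result.
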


In this paper, we enhance El-Fallah and Ransford's general relationship for infinite-dimensional Banach spaces, specifically addressing two families of multiple Toeplitz operators. Proving the stability of numerical approximation methods requires constructing families of operators that satisfy a particular resolvent condition and have uniformly bounded powers \cite{AN20, BS00, SW88}. Despite the fact that the power bound is not controlled by the resolvent condition, we demonstrate that the power bound for these operators has an upper bound determined by a polynomial in the resolvent condition. Our methods extend \textit{mutatis mutandis} to operators of the form $T_{g(z)}^{-1}T_{f(z)}T_{g(z)}$ where $f(z)$ is a polynomial in $z$ and $\overline{z}$ and $g(z)$ is a polynomial in $z$. This work is of interest since such operators arise in the numerical solution of initial value problems commonly encountered in science and engineering. 

We investigate families of operators that do not fall into a well-understood class. In particular, they are not normal, not Toeplitz, and not contractions. Consequently, standard results on special types of operators are inapplicable. We utilize the reproducing kernel Hilbert space structure of the Hardy space on the unit disk as well as properties of Toeplitz operators and operator norms to compute explicit estimates that would not be possible through more general functional analytic techniques. We demonstrate how power-bounded operators can be sourced from reproducing kernel spaces, characterize the extremal growth of their power bound and resolvent condition, and indicate the applicability of these results to numerical approximation methods. 

The structure of this paper is as follows. In Section 2, we provide background and a precise formulation of the research problem. In Section 3, we state the main results of the paper. Sections 4, 5, and 6 are devoted to the proofs of Theorems 3.1, 3.2, and 3.3, respectively. In Section 7, we detail the application of this work to the stability analysis of numerical approximation methods for initial value problems. 

\section{Background}
\subsection{Power-Boundedness and the Resolvent Condition}
    We define the power bound $M(A)$ and a resolvent condition $P(A)$ below. In this paper, we investigate the relation between $M(A)$ and $P(A)$ for two classes of operators.
    
    \begin{dfn}
        Let $A$ be a bounded operator with spectrum $\sigma(A)$ contained within the closed unit disk $\overline{\mathbf{D}}$ of $\mathbf{C}$. 
\begin{enumerate}[a)] 
  \item Set \[M(A) :=\sup_{n \geq 0} \|A^n\|\] If $M(A)<+\infty$, then $A$ is \textit{power-bounded}.
  \item We set 
  \[P(A):=\sup_{|\lambda|>1} \dist(\lambda, \sigma(A))\|(\lambda I - A)^{-1}\|\]
\end{enumerate}
    \end{dfn} 
All operators $A$ considered in the remainder of this paper will satisfy the condition $\sigma(A) \subseteq \overline{\mathbf{D}}$.
    \begin{rmk}
Diagonal operators illustrate the motivation for the expression $P(A)$. Let $D = \text{diag}(d_1, ..., d_n)$ with $\sigma(D) \subseteq \overline{\mathbf{D}}$. 
Then,
\[\|(zI- D)^{-1}\| = \max_j |(z-d_j)^{-1}| = \frac{1}{\min_j |z-d_j|} = \frac{1}{\dist(z, \sigma(D))}\]
This shows that $\|(zI-A)^{-1}\|$ and $\dist(z, \sigma(A))$ counteract each other and that they do so equally for diagonal operators, as $P(D)=1$. The association between these quantities can be seen in several papers such as \cite{DS06}, \cite{San96}, \cite{SS96}, and \cite{Zar09}.
\end{rmk}  

\begin{rmk}
For all operators $A$, note that $A^0$ is the identity operator, so $M(A) \geq \|A^0\|=1$. Furthermore, all operators $A$ also satisfy $P(A) \geq 1$. We prove this below in the manner of Corollary VII.3.3 of Dunford and Schwartz \cite{DS88}.  
\end{rmk}
\begin{prop}
\[1 \leq P(A)\]
\end{prop}
\begin{proof}
For $|\lambda|>1$, we have that 
\begin{align*} 
P(A) &\geq \dist(\lambda, \sigma(A))\|(\lambda I - A)^{-1}\| \\
&\geq  \dist(\lambda, \sigma(A)) \rho((\lambda I - A)^{-1}) \\
&= \dist(\lambda, \sigma(A)) \cdot \sup_{\varphi \in \mathfrak{M}} \left|\frac{1}{z-\varphi(A)}\right| \\
&= \dist(\lambda, \sigma(A))\cdot \frac{1}{\dist(\lambda, \sigma(A))} \\
&=1
\end{align*}
\end{proof}

\subsection{Hardy Space Toeplitz Operators}
\begin{dfn}
Let $\mathbf{D} \subseteq \mathbf{C}$ denote the open unit disk and let $\,d\theta$ denote the normalized arc length measure on the unit circle $\partial \mathbf{D}$. The \textit{Hardy space} $H^2$ is the closed linear span in $L^2(\partial \mathbf{D}, \theta)$ of $\{z^n: n \in \mathbf{N}\}$ where $z$ is the coordinate function on $\partial \mathbf{D}$, meaning $z(e^{it})=e^{it}$, and $\mathbf{N}$ is the set of nonnegative integers. For $f \in L^\infty(\partial \mathbf{D}, \theta)$, the \textit{Toeplitz operator with symbol $f$}, denoted $T_f$, is the operator on $H^2$ defined by 
\[T_fh = P(fh)\] 
where $P$ denotes the orthogonal projection of $L^2(\partial \mathbf{D}, \theta)$ onto $H^2$.  
\end{dfn}

The set $\{z^n: n \in \mathbf{N}\}$ forms an orthonormal basis for $H^2$ and the matrix of a Toeplitz operator with respect to it. If $f$ is a function in $L^\infty(\partial \mathbf{D})$ with Fourier coefficients $\hat{f}(n) = \int_{\partial \mathbf{D}} f \chi_{-n} \,d\theta$, then the matrix $\{a_{m,n}\}_{m,n \in \mathbf{N}}$ for $T_f$ with respect to the basis $\{\chi_n: n \in \mathbf{N}\}$ is 
\[a_{m,n} = (T_f \chi_n, \chi_m) = \int_{\partial \mathbf{D}} f \chi_{n-m} \,d\theta = \hat{f}(m-n)\]
The matrix coefficients satisfy $a_{m,n}=a_{m-n,0}$. Such a matrix is called a \textit{Toeplitz matrix}.

\subsection{Reproducing Kernel Hilbert Space}
The exposition in this section is based on \cite{Paulsen}. 
\begin{dfn}
Given a set $X$, we call $\mathcal{H}$ a \textit{reproducing kernel Hilbert space} (RKHS) over $\mathbf{C}$ provided the following hold. 
\begin{enumerate}[a)]
  \item $\mathcal{H}$ is a vector subspace of the space of linear functionals on $X$.
  \item $\mathcal{H}$ is endowed with an inner product, making it into a Hilbert space.
  \item For every $y \in X$, the linear evaluation functional $E_y: \mathcal{H} \to \mathbf{C}$ defined by $E_y(f) = f(y)$ is bounded. The vector that corresponds to this evaluation map is called the \textit{kernel vector corresponding to $g$}. An inner product with a kernel vector provides point evaluation in a RKHS. 
\end{enumerate}
\end{dfn}

The Hardy space $H^2$ is unitarily equivalent to the RKHS $H^2(\mathbf{D})$ over the open disk $\mathbf{D}$ in such a way that coordinate multiplication is preserved. Because of this equivalence, we identify the spaces $H^2(\mathbf{D})$ and $H^2$ and carry over the kernel vectors of $H^2(\mathbf{D})$ to $H^2$.  The kernel vector for $H^2(\mathbf{D})$ for a point $\omega \in \mathbf{D}$ is
\[k_{\omega}(z) = \sum_{n=0}^\infty \overline{\omega^n} z^n = \frac{1}{1-\overline{\omega}z}\]
because for any $f(z) = \sum_{n=0}^\infty a_n z^n \in H^2(\mathbf{D})$, the statement $\langle f, k_\omega \rangle = f(\omega)$ holds. For Toeplitz operators, we have the following property.
\begin{prop}
If $q$ is analytic and bounded on $\mathbf{D}$, then $T_q^\ast k_\omega= \overline{q(\omega)} k_\omega$ where $k_\omega$ is the kernel vector corresponding to $\omega \in \mathbf{D}$.
\end{prop}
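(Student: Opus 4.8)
The plan is to unwind the definition of the adjoint against an arbitrary test vector, invoke the reproducing property of $k_\omega$, and exploit the fact that a Toeplitz operator with analytic (i.e.\ $H^\infty$) symbol acts simply as multiplication by that symbol. Concretely, fix $\omega \in \mathbf{D}$; since $\omega$ lies in the open disk, $k_\omega(z) = (1-\overline{\omega}z)^{-1}$ is a genuine element of $H^2$, so $T_q^\ast k_\omega$ is well defined. For an arbitrary $f \in H^2$ I would compute
\[
\langle f, T_q^\ast k_\omega \rangle = \langle T_q f, k_\omega \rangle = \langle P(qf), k_\omega \rangle,
\]
using only the definition of the adjoint and of $T_q$.

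The key step is to observe that when $q$ is analytic and bounded on $\mathbf{D}$ — equivalently $q \in H^\infty(\partial\mathbf{D})$ — multiplication by $q$ maps $H^2$ into $H^2$, so $qf \in H^2$ and hence $P(qf) = qf$. Consequently $\langle P(qf), k_\omega\rangle = \langle qf, k_\omega \rangle = (qf)(\omega) = q(\omega)\, f(\omega)$ by the reproducing property. Rewriting $q(\omega) f(\omega) = q(\omega)\langle f, k_\omega\rangle = \langle f, \overline{q(\omega)}\, k_\omega\rangle$, we obtain
\[
\langle f, T_q^\ast k_\omega \rangle = \langle f, \overline{q(\omega)}\, k_\omega \rangle \qquad \text{for every } f \in H^2.
\]
Since the inner product is nondegenerate, this forces $T_q^\ast k_\omega = \overline{q(\omega)}\, k_\omega$, which is the claim.

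The only point requiring genuine care — and the place I would spell out a short justification rather than assert it — is the identity $P(qf) = qf$ for $q \in H^\infty$ and $f \in H^2$; that is, that the product of a bounded analytic function and an $H^2$ function is again in $H^2$ (so that the Riesz projection acts trivially on it). This follows because $qf$ has nonnegative Fourier coefficients only, as both factors do, and $\|qf\|_{2} \le \|q\|_\infty \|f\|_2$; one can see this most cleanly by passing to boundary values on $\partial\mathbf{D}$ and noting $q f \in L^2$ with $\widehat{qf}(n) = 0$ for $n < 0$. Everything else in the argument is the formal manipulation of the adjoint and the reproducing kernel identity already recorded in the excerpt, so no further obstacle is anticipated.
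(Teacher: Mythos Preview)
Your proof is correct and is the standard argument for this well-known fact. The paper states this proposition as background material without proof, so there is nothing to compare against; your write-up would serve perfectly well as the omitted justification.
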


\subsection{Research Problem}
Although it is known that $M(A)$ is not controlled by $P(A)$, a relationship of the form $M(A) \leq CP(A)^k$ where $C$ and $k$ are numerical constants holds for some classes of operators $A$. In this paper, we provide explicit bounds that relate $M(A)$ and $P(A)$ for the following families of operators $A$: 
\begin{itemize}
    \item $T_{1+\beta z}^{-1} T_{\overline{z}}T_{1+\beta z}$
    \item $T_{1+\beta z}^{-1} T_{\frac{z+\overline{z}}{2}}T_{1+\beta z}$
\end{itemize}
The choice of $1+\beta z$ is due to the following. 

\begin{rmk}
By linearity of Toeplitz operators, 
\[T_{a+bz}^{-1}T_fT_{a+bz} =T_{1+\frac{b}{a}z}^{-1}T_fT_{1+\frac{b}{a}z} \]
The two parameters $a$ and $b$ can be encapsulated into a single parameter, call it $\beta:=\frac{b}{a}$. Thus, we use $1+\beta z$ in the remainder of this paper for simplicity of notation. The operators we consider have spectrum in the closed unit disk, so $|\beta|<1$. Furthermore, the case $\beta=0$ is uninteresting since its undesirable consequence is that $A$ is a Toeplitz operator. Hence, we study our two families of operators when the parameter $\beta$ is such that $0 < |\beta|<1$.  
\end{rmk}

These two families of operators were selected since they contain infinitely many points in their spectra and do not fall into a well-understood class of operators. More precisely, our operators take the form $A = T_g^{-1}T_fT_g$, where $f$ and $g$ are selected such that $A$ is not a contraction, not normal, and not Toeplitz. In particular, if $f$ extends to be analytic on $\mathbf{D}$, then $A$ equals $T_f$. Thus, we are interested in the case where $f$ does not extend analytically to $\mathbf{D}$. For certain operators of the form of $A$, we have the following result. 
\begin{prop}
Let $A = T_g^{-1}T_fT_g$, $\|f\|_\infty \leq 1$, and $g$ analytic in $\overline{\mathbf{D}}$ such that $\sigma(A) = \overline{\mathbf{D}}$. Then, $P(A) \leq M(A)$.
\end{prop}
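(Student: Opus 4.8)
The plan is to bound the resolvent of $A$ directly through its Neumann series. If $M(A)=+\infty$ the inequality is trivial, so I would assume $A$ is power-bounded; note first that $A$ is genuinely a bounded operator, since $g$ extends analytically past $\overline{\mathbf{D}}$ (so $T_g$ and $T_g^{-1}$ are bounded) and $\|T_f\|\le\|f\|_\infty\le 1$.

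Next I would use that $\sigma(A)\subseteq\overline{\mathbf{D}}$ forces the spectral radius of $A$ to be at most $1$, so that for every $\lambda$ with $|\lambda|>1$ the series $\sum_{n\ge 0}\lambda^{-(n+1)}A^n$ converges in operator norm to $(\lambda I-A)^{-1}$. Estimating term by term and inserting $\|A^n\|\le M(A)$ gives
\[
\|(\lambda I-A)^{-1}\|\;\le\;M(A)\sum_{n=0}^{\infty}|\lambda|^{-(n+1)}\;=\;\frac{M(A)}{|\lambda|-1}.
\]
Then, since $\sigma(A)=\overline{\mathbf{D}}$, the point of $\sigma(A)$ nearest a given $\lambda$ with $|\lambda|>1$ is $\lambda/|\lambda|\in\partial\mathbf{D}$, so $\dist(\lambda,\sigma(A))=|\lambda|-1$ exactly. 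Multiplying the two facts and taking the supremum over $|\lambda|>1$ yields $\dist(\lambda,\sigma(A))\,\|(\lambda I-A)^{-1}\|\le M(A)$, that is, $P(A)\le M(A)$.

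I do not expect a genuine obstacle here: this is the standard operator-norm Neumann series estimate, the same computation that shows a power bound $M$ forces the Kreiss constant to be at most $M$. The only two points requiring any care are the justification that $\sum\lambda^{-(n+1)}A^n$ actually represents the resolvent — which relies on $\sigma(A)\subseteq\overline{\mathbf{D}}$, hence spectral radius strictly below $|\lambda|$ — and the exact evaluation $\dist(\lambda,\sigma(A))=|\lambda|-1$, which is precisely where the hypothesis $\sigma(A)=\overline{\mathbf{D}}$ (rather than merely $\sigma(A)\subseteq\overline{\mathbf{D}}$) enters; for a proper spectral subset one would only get $P(A)\le c\,M(A)$ with $c=\sup_{|\lambda|>1}\dist(\lambda,\sigma(A))/(|\lambda|-1)\ge 1$.
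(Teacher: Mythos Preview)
Your proof is correct and is essentially the same as the paper's: both expand the resolvent as the Neumann series $\sum_{n\ge 0}\lambda^{-(n+1)}A^n$, bound each term by $M(A)/|\lambda|^{n+1}$, sum to $M(A)/(|\lambda|-1)$, and then use $\sigma(A)=\overline{\mathbf{D}}$ to identify $\dist(\lambda,\sigma(A))$ with $|\lambda|-1$. Your additional remarks (the trivial case $M(A)=+\infty$, the boundedness of $A$, and the role of the full-disk hypothesis) are sound elaborations but do not change the argument.
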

\begin{proof}
Observe that 
\[ \|(\lambda I - A)^{-1}\|  = \left\|\sum_{j=0}^\infty \frac{A^j}{\lambda^{j+1}}\right\| \leq \sum_{j=0}^\infty \frac{\|A^j\|}{|\lambda|^{j+1}} \leq \sum_{j=0}^\infty \frac{M(A)}{|\lambda|^{j+1}} =\frac{M(A)}{|\lambda|-1} \]

Note that $\sigma(A) = \overline{\mathbf{D}}$, so $\dist(\lambda, \sigma(A))=|\lambda|-1$. Hence, 
\[\dist(\lambda, \sigma(A))\|(\lambda I - A)^{-1}\| \leq M(A)\]
for all $|\lambda|>1$. In particular,  
\[P(A) = \sup_{|\lambda|>1} \dist(\lambda, \sigma(A))\|(\lambda I - A)^{-1}\| \leq M(A)\]
\end{proof}

\subsection{Notational Conventions}
In this section, we define for the reader some of the conventions employed in the remainder of the paper. 
\begin{dfn}
The \textit{commutator} of two operators $X$ and $Y$ is defined by $[X,Y]= XY-YX$.  
\end{dfn}
The commutator is an indicator of how significantly the operators $X$ and $Y$ fail to commute. It equals zero if and only if $X$ and $Y$ do commute.
\begin{dfn}
Given $u$ and $v$ in a Hilbert space $\mathcal{H}$, define the operator $uv^\ast$ on $\mathcal{H}$ by $uv^\ast h = \langle h,v \rangle u$. 
\end{dfn}

\section{Main Results}
The main results of this paper are the following three theorems. 
\begin{thm}
Let $A=T_{1+\beta z}^{-1}T_{\overline z}T_{1+\beta z}$. Then,
\[\max \left\{1, \frac{|\beta|}{\sqrt{1-|\beta|^2}} \right\}\leq M(A) \leq  1+\frac{|\beta|}{\sqrt{1-|\beta|^2}} \]
and 
\[\max \left\{1,\frac{c_0|\beta|}{\sqrt{1-|\beta|^2}}\right\} \leq P(A) \leq \min \left\{M(A),1+\frac{c_0|\beta|}{\sqrt{1-|\beta|^2}}\right\}\]
where $c_0 = \frac{\sqrt{2}(\sqrt{5}-1)}{(1+\sqrt{5})^{\frac{3}{2}}}$.
\end{thm}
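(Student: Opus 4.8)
The plan is to use the expression $A = T_{1/(1+\beta z)}\,T_{\overline z}\,T_{1+\beta z}$, which is legitimate because $|\beta|<1$ places the zero $-1/\beta$ of $1+\beta z$ outside $\overline{\mathbf D}$, so $1/(1+\beta z)\in H^\infty$ and $T_{1+\beta z}^{-1}=T_{1/(1+\beta z)}$. Since similar operators are cospectral and $T_{\overline z}$ is a contraction with point spectrum $\mathbf D$, we get $\sigma(A)=\sigma(T_{\overline z})=\overline{\mathbf D}$, hence $\dist(\lambda,\sigma(A))=|\lambda|-1$ for $|\lambda|>1$; moreover the Section~2 proposition on operators $T_g^{-1}T_fT_g$ applies with $f=\overline z$ ($\|f\|_\infty=1$) and $g=1+\beta z$ (analytic on $\overline{\mathbf D}$, $\sigma(A)=\overline{\mathbf D}$) and yields $P(A)\le M(A)$ at once. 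Conjugating by the unitary $f(z)\mapsto f(e^{i\vartheta}z)$ replaces $\beta$ by $e^{i\vartheta}\beta$ up to an overall unimodular scalar, and neither $M$ nor $P$ changes under a unitary conjugation or a unimodular scalar multiple, so I may assume $\beta\in(0,1)$ throughout. The computational heart is the closed form
\[
 A^n f = T_{\overline z}^{\,n} f + \beta\,\widehat f(n-1)\,k_{-\beta}, \qquad n\ge 1,
\]
with $k_{-\beta}(z)=1/(1+\beta z)$ the reproducing kernel at $-\beta$ and $\widehat f(-1):=0$. To prove it, write $A^n f=T_{1/(1+\beta z)}\bigl(T_{\overline z}^{\,n}f+\beta T_{\overline z}^{\,n-1}f\bigr)$ and use the elementary identity $(1+\beta z)\,T_{\overline z}h=T_{\overline z}h+\beta\bigl(h-h(0)\bigr)$, which rearranges, with $h=T_{\overline z}^{\,n-1}f$ and $h(0)=\widehat f(n-1)$, to $\bigl(T_{\overline z}^{\,n}f+\beta T_{\overline z}^{\,n-1}f\bigr)/(1+\beta z)=T_{\overline z}^{\,n}f+\beta\widehat f(n-1)k_{-\beta}$.

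From the closed form, $M(A)$ is straightforward. The triangle inequality with $\|T_{\overline z}^{\,n}f\|\le\|f\|$, $|\widehat f(n-1)|\le\|f\|$, and $\|k_{-\beta}\|=(1-\beta^2)^{-1/2}$ gives $\|A^n\|\le 1+\beta/\sqrt{1-\beta^2}$ for all $n$, hence the upper bound; and $M(A)\ge\|A^0\|=1$ together with the $n=1$, $f=1$ instance $A\cdot1=\beta k_{-\beta}$, which gives $\|A\|\ge\|A\cdot1\|=\beta/\sqrt{1-\beta^2}$, yields $M(A)\ge\max\{1,\beta/\sqrt{1-\beta^2}\}$.

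For $P(A)$, feeding the closed form into $(\lambda I-A)^{-1}=\sum_{n\ge0}\lambda^{-n-1}A^n$ and using $\sum_{m\ge0}\lambda^{-m}\widehat f(m)=f(1/\lambda)=\langle f,k_{1/\lambda}\rangle$ gives, for $|\lambda|>1$, the rank-one correction $(\lambda I-A)^{-1}=(\lambda I-T_{\overline z})^{-1}+\tfrac{\beta}{\lambda^2}\,k_{-\beta}\,k_{1/\lambda}^{\ast}$. For the upper bound we already have $P(A)\le M(A)$; additionally, from $\|(\lambda I-T_{\overline z})^{-1}\|\le\sum_{n\ge0}|\lambda|^{-n-1}=(|\lambda|-1)^{-1}$, $\|k_{1/\lambda}\|=|\lambda|(|\lambda|^2-1)^{-1/2}$, and $\|k_{-\beta}\|=(1-\beta^2)^{-1/2}$, the triangle inequality gives $\|(\lambda I-A)^{-1}\|\le(|\lambda|-1)^{-1}+\tfrac{\beta}{\sqrt{1-\beta^2}}\cdot\tfrac{1}{|\lambda|\sqrt{|\lambda|^2-1}}$; multiplying by $|\lambda|-1$ and putting $r=|\lambda|$ reduces the matter to $\sup_{r>1}\tfrac{r-1}{r\sqrt{r^2-1}}$, whose critical equation is $r^2-r-1=0$, so the maximiser is the golden ratio $r=\tfrac{1+\sqrt5}{2}$ and the maximum is exactly $c_0$, giving $P(A)\le1+c_0\beta/\sqrt{1-\beta^2}$. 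For the lower bound, $P(A)\ge1$ is immediate; for the rest take $\lambda=r:=\tfrac{1+\sqrt5}{2}$ and test against the $T_{\overline z}$-eigenvector $f=k_{1/r}$: the resolvent identity then collapses to $(\lambda I-A)^{-1}k_{1/r}=\tfrac{r}{r^2-1}k_{1/r}+\tfrac{\beta}{r^2-1}k_{-\beta}$, and expanding $\|\cdot\|^2$ using $\langle k_{1/r},k_{-\beta}\rangle=\tfrac{r}{r+\beta}>0$, dividing by $\|k_{1/r}\|^2$, and multiplying by $(r-1)^2$ produces three nonnegative terms whose last one equals $\tfrac{\beta^2}{1-\beta^2}\cdot\tfrac{r-1}{r^2(r+1)}=\tfrac{c_0^2\beta^2}{1-\beta^2}$, precisely because $r$ also maximises $\tfrac{r-1}{r^2(r+1)}$ (same critical equation, and $c_0^2=\tfrac{5\sqrt5-11}{2}$ is that maximum). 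Discarding the first two terms gives $P(A)\ge c_0\beta/\sqrt{1-\beta^2}$, completing $P(A)\ge\max\{1,c_0\beta/\sqrt{1-\beta^2}\}$.

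The substance of the argument lies in two places. The first is the closed form for $A^n$: it is what lets reproducing-kernel identities stand in for general operator theory, and once it is available everything else is bookkeeping. The second is that the optimiser $\lambda=\tfrac{1+\sqrt5}{2}$ dictated by the resolvent \emph{upper} bound is exactly the value at which the ``discard the nonnegative terms'' estimate for the resolvent \emph{lower} bound becomes sharp, so that the single extremal problem $\sup_{r>1}\tfrac{r-1}{r\sqrt{r^2-1}}$ produces the common constant $c_0$ on both sides; arranging for both sides to land on the same $c_0$ is the delicate part of the computation. A small but necessary technical point is the reduction to real $\beta$, without which the cross term $\langle k_{1/r},k_{-\beta}\rangle$ need not be real and the final positivity step loses its clean form.
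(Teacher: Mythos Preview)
Your argument is correct, and the structural backbone is the same as the paper's: both hinge on the rank-one identity $A^n=(T_z^\ast)^n+\beta\,k_{-\overline\beta}\,e_{n-1}^\ast$ and its resolvent analogue $(\lambda I-A)^{-1}=(\lambda I-T_{\overline z})^{-1}+\tfrac{\beta}{\lambda^2}k_{-\overline\beta}k_{1/\lambda}^\ast$, after which the upper bounds for $M(A)$ and $P(A)$ follow by the triangle inequality and the single-variable maximisation of $(r-1)/(r\sqrt{r^2-1})$ at the golden ratio. Your derivation of these formulas (via the identity $(1+\beta z)T_{\overline z}h=T_{\overline z}h+\beta(h-h(0))$ and then summing the Neumann series) is a slightly different packaging of the paper's commutator computation, but it lands in the same place.

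Where your proof genuinely diverges is in the two lower bounds. For $M(A)$ the paper tests $A^n z^{n-1}$ against a normalised kernel $\widehat{k_\omega}$ and optimises in $\omega$; your choice $n=1$, $f=1$, giving $A\cdot 1=\beta k_{-\beta}$, is shorter and yields the same bound directly. For $P(A)$ the paper computes $Q(\lambda)=\langle(\lambda I-A)^{-1}\lambda^2 k_{1/\lambda},\beta k_{-\overline\beta}\rangle/\|\lambda^2 k_{1/\lambda}\|\|\beta k_{-\overline\beta}\|$, averages $|Q(re^{it})|^2$ over the circle, and uses $H^2$-orthogonality to isolate the term $F_2(r)^2$; you instead fix the single real $\lambda=r=\tfrac{1+\sqrt5}{2}$, apply the resolvent to the $T_{\overline z}$-eigenvector $k_{1/r}$, and discard two nonnegative terms in the norm expansion. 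Your reduction to $\beta\in(0,1)$ is what makes the cross term $\langle k_{1/r},k_{-\beta}\rangle=r/(r+\beta)$ real and positive, so the discard is legitimate; the paper does not make this reduction and compensates with the averaging trick. Both routes exploit the coincidence you highlight, namely that $\tfrac{r-1}{r\sqrt{r^2-1}}$ and $\tfrac{r-1}{r^2(r+1)}$ share the critical equation $r^2-r-1=0$, which is why the same constant $c_0$ appears on both sides.
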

\begin{thm}
Set \[P_S(A) := \sup_{|\lambda|>1} \dist(\lambda, S)\|(\lambda I - A)^{-1}\|\] where $S\subseteq \sigma(A) = [-1,1]$. Then, 
\begin{itemize}
    \item $P_{\sigma(A)}(A) \leq P_{\{-1,1\}}(A)$
    \item $M(A)\leq eP_{\{-1,1\}}(A)^2 \leq 2eP_{\sigma(A)}(A)^2$
\end{itemize}
\end{thm}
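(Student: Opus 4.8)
The first bullet is immediate: since $\{-1,1\}\subseteq[-1,1]=\sigma(A)$, one has $\dist(\lambda,\sigma(A))\le\dist(\lambda,\{-1,1\})$ for every $\lambda\in\mathbf{C}$, and multiplying by $\|(\lambda I-A)^{-1}\|$ and taking the supremum over $|\lambda|>1$ yields $P_{\sigma(A)}(A)\le P_{\{-1,1\}}(A)$. For the second bullet the plan is to establish the two inequalities $M(A)\le eP_{\{-1,1\}}(A)^2$ and $eP_{\{-1,1\}}(A)^2\le2eP_{\sigma(A)}(A)^2$ separately; the first rests on Theorem 1.1, and the second reduces to a purely geometric comparison of distances on $\{|\lambda|>1\}$.

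For $M(A)\le eP_{\{-1,1\}}(A)^2$ I would apply Theorem 1.1 with the finite set $E=\{-1,1\}\subseteq\partial\mathbf{D}$, for which $\#E=2$. Recall that $A=T_{1+\beta z}^{-1}T_{(z+\overline z)/2}T_{1+\beta z}$ is similar to the self-adjoint operator $T_{(z+\overline z)/2}$, whose symbol is real with range $[-1,1]$, so $\sigma(A)=[-1,1]$ and $\sigma(A)\cap\partial\mathbf{D}=\{-1,1\}=E$. Taking $C=P_{\{-1,1\}}(A)$ --- by definition the least constant with $\|(\lambda I-A)^{-1}\|\le C/\dist(\lambda,\{-1,1\})$ on $\{|\lambda|>1\}$ --- Theorem 1.1 gives $M(A)\le\tfrac e2 C^2\#E=eP_{\{-1,1\}}(A)^2$. (If $P_{\{-1,1\}}(A)=+\infty$ the bound is vacuous; and in fact it is finite, because $1+\beta z$ is bounded below by $1-|\beta|>0$ on $\overline{\mathbf{D}}$, so $T_{1+\beta z}^{-1}=T_{1/(1+\beta z)}$, and by the spectral theorem for $T_{(z+\overline z)/2}$ one gets $\|(\lambda I-A)^{-1}\|=\|T_{1/(1+\beta z)}(\lambda I-T_{(z+\overline z)/2})^{-1}T_{1+\beta z}\|\le\|T_{1/(1+\beta z)}\|\,\|T_{1+\beta z}\|/\dist(\lambda,[-1,1])$ for $\lambda\notin[-1,1]$, which is controlled on $\{|\lambda|>1\}$ after multiplying by $\dist(\lambda,\{-1,1\})$ via the comparison below.)

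The geometric comparison, which also delivers the second inequality, is elementary: I would show $\dist(\lambda,\{-1,1\})\le\sqrt2\,\dist(\lambda,[-1,1])$ for every $\lambda=x+iy$ with $|\lambda|>1$. If $|x|\ge1$ the point of $[-1,1]$ nearest $\lambda$ is $+1$ or $-1$, so the two distances coincide. If $|x|<1$ then $\dist(\lambda,[-1,1])=|y|$, while, taking $x\ge0$ without loss of generality, $\dist(\lambda,\{-1,1\})\le|\lambda-1|=\sqrt{(1-x)^2+y^2}$; since $x^2+y^2>1$ with $0\le x<1$ forces $y^2>1-x^2\ge(1-x)^2$, we obtain $\sqrt{(1-x)^2+y^2}<\sqrt2\,|y|$. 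Hence $\dist(\lambda,\{-1,1\})\le\sqrt2\,\dist(\lambda,[-1,1])$ throughout $\{|\lambda|>1\}$, so $P_{\{-1,1\}}(A)\le\sqrt2\,P_{\sigma(A)}(A)$, and combining with the previous paragraph $M(A)\le eP_{\{-1,1\}}(A)^2\le2eP_{\sigma(A)}(A)^2$.

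The step that needs care is the application of Theorem 1.1. Because $\sigma(A)=[-1,1]$ reaches into the open disk, the estimate $\|(\lambda I-A)^{-1}\|\le C/\dist(\lambda,\{-1,1\})$ cannot hold for all $\lambda\notin\{-1,1\}$: it fails on $(-1,1)$, and already at $\lambda=it$ with $t\in(0,1/C)$, where $\|(\lambda I-A)^{-1}\|\ge1/\dist(it,[-1,1])=1/t>C\ge C/\dist(it,\{-1,1\})$. So the resolvent condition here is genuinely an ``outside $\overline{\mathbf{D}}$'' condition, and one must invoke Theorem 1.1 in the form --- the one relevant to the Kreiss matrix theorem and the numerical-stability references cited in the introduction --- in which the estimate is imposed only for $|\lambda|>1$. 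That this suffices is visible from the El-Fallah and Ransford argument, which samples the resolvent only on circles $|\lambda|=1+\tfrac1n$: one represents $(n+1)A^n=\tfrac1{2\pi i}\oint_{|\lambda|=1+1/n}\lambda^{n+1}(\lambda I-A)^{-2}\,d\lambda$, bounds $\|(\lambda I-A)^{-2}\|\le\|(\lambda I-A)^{-1}\|^2\le P_{\{-1,1\}}(A)^2/\dist(\lambda,\{-1,1\})^2$, and integrates using $\int_0^{2\pi}d\theta/\dist((1+\tfrac1n)e^{i\theta},\{-1,1\})^2=O(n)$, the factor $n$ cancelling the $1/(n+1)$ and leaving the constant $\tfrac e2\#E=e$. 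If Theorem 1.1 as quoted is to be read with a global resolvent hypothesis, transcribing this computation in the present setting is the actual labor; otherwise the proof is exactly the three steps above.
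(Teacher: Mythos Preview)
Your proof is correct and follows essentially the same route as the paper's: the subset inclusion $\{-1,1\}\subseteq\sigma(A)$ for the first bullet, Theorem~1.1 with $E=\{-1,1\}$ and $C=P_{\{-1,1\}}(A)$ for $M(A)\le eP_{\{-1,1\}}(A)^2$, and the same geometric case split ($|x|\ge1$ versus $|x|<1$, with $(1-x)^2\le y^2$ forced by $|\lambda|>1$) for $P_{\{-1,1\}}(A)\le\sqrt{2}\,P_{\sigma(A)}(A)$. Your discussion of the domain on which the resolvent hypothesis of Theorem~1.1 is required is more careful than the paper's, which simply invokes the theorem; your observation that the El-Fallah--Ransford contour argument only samples $|\lambda|=1+1/n$ is exactly the right justification.
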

\begin{thm}
Let $A = T_{1+\beta z}^{-1}T_{\frac{z+\overline{z}}{2}}T_{1+\beta z}$. Then, 
\[
\max \left\{\frac{\left|\beta - \overline{\beta}(1-|\beta|^2)\right|}{2\sqrt{1-|\beta|^2}} , \frac{\sqrt{1-|\beta|^2}}{8} \left|\frac{2+\overline{\beta}^2}{1-|\beta|^2}-2\overline{\beta} - \overline{\beta}^3\right|\right\}\leq M(A)  \leq \frac{1+|\beta|}{1-|\beta|}\]
and
\[\max \left\{1, \sqrt{\frac{M(A)}{2e}} \right\} \leq P(A) \leq \frac{1+|\beta|}{1-|\beta|}.\]
\end{thm}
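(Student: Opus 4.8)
\section*{Proof proposal for Theorem~3.3}

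The whole argument hinges on recognizing the structure of $A=T_{1+\beta z}^{-1}T_{f}T_{1+\beta z}$ with $f=\tfrac{z+\overline{z}}{2}$. Write $g=1+\beta z$; since $|\beta|<1$, $g$ is analytic and nonvanishing on $\overline{\mathbf{D}}$, so $T_g$ is invertible with $T_g^{-1}=T_{1/g}$, and $T_f=\tfrac12(T_z+T_{\overline{z}})$ by linearity of the Toeplitz correspondence. Two facts will drive everything. First, $A$ is \emph{similar} to the self-adjoint Toeplitz operator $T_f$, whose symbol is real with essential range $[-1,1]$; hence $\sigma(A)=\sigma(T_f)=[-1,1]$, consistent with the hypothesis of Theorem~3.2. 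Second, $A$ is a rank-one perturbation of $T_f$: because $z$ is analytic, $T_z$ commutes with $T_g$, so $[T_f,T_g]=\tfrac\beta2[T_{\overline{z}},T_z]$, and $[T_{\overline{z}},T_z]=I-T_zT_{\overline{z}}$ (using that $T_z$ is an isometry) is the orthogonal projection onto the constant functions; therefore $A-T_f=T_g^{-1}[T_f,T_g]=\tfrac\beta2\,u\,v^{\ast}$ in the notation of the excerpt, with $u=(1+\beta z)^{-1}$ and $v$ the constant function $1$. I will use the similarity for the upper bounds and the spectrum, and the rank-one picture together with the reproducing kernel for the lower bounds on $M(A)$.

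The upper bounds follow at once from the similarity, since the conjugation telescopes: $A^{n}=T_g^{-1}T_f^{\,n}T_g$ and $(\lambda I-A)^{-1}=T_g^{-1}(\lambda I-T_f)^{-1}T_g$. Here $\|T_g\|=\|1+\beta z\|_{\infty}=1+|\beta|$, $\|T_g^{-1}\|=\|(1+\beta z)^{-1}\|_{\infty}=(1-|\beta|)^{-1}$, and $\|T_f^{\,n}\|\le\|T_f\|^{n}\le\|f\|_{\infty}^{\,n}=1$ because $f$ is real-valued with $\|f\|_{\infty}=1$. Hence $\|A^{n}\|\le\|T_g^{-1}\|\,\|T_f^{\,n}\|\,\|T_g\|\le\tfrac{1+|\beta|}{1-|\beta|}$, so $M(A)\le\tfrac{1+|\beta|}{1-|\beta|}$. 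For the resolvent, $T_f$ is normal, so $\|(\lambda I-T_f)^{-1}\|=\dist(\lambda,\sigma(T_f))^{-1}=\dist(\lambda,\sigma(A))^{-1}$ for $|\lambda|>1$, whence $\dist(\lambda,\sigma(A))\,\|(\lambda I-A)^{-1}\|\le\|T_g^{-1}\|\,\|T_g\|=\tfrac{1+|\beta|}{1-|\beta|}$ for every such $\lambda$, giving $P(A)\le\tfrac{1+|\beta|}{1-|\beta|}$.

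The lower bound on $P(A)$ is then essentially free: $P(A)\ge1$ was proved earlier, and since $P_{\sigma(A)}(A)=P(A)$ by definition, the second bullet of Theorem~3.2 reads $M(A)\le 2e\,P(A)^{2}$, i.e. $P(A)\ge\sqrt{M(A)/(2e)}$; combining gives $P(A)\ge\max\{1,\sqrt{M(A)/(2e)}\}$. For the lower bounds on $M(A)$ the plan is to test a suitable power against a reproducing kernel: $M(A)\ge\|A^{n}\|\ge|\langle A^{n}h,k_\omega\rangle|/(\|h\|\,\|k_\omega\|)$. Using $T_g^{\ast}k_\omega=\overline{g(\omega)}\,k_\omega$, hence $(T_g^{-1})^{\ast}k_\omega=\overline{g(\omega)}^{-1}k_\omega$, together with $A^{n}=T_g^{-1}T_f^{\,n}T_g$, one gets the clean reduction
\[
\langle A^{n}h,k_\omega\rangle=\big\langle T_f^{\,n}(T_g h),(T_g^{-1})^{\ast}k_\omega\big\rangle=\frac{1}{g(\omega)}\big(T_f^{\,n}(T_g h)\big)(\omega),
\]
so the computation reduces to applying the tridiagonal operator $T_f=\tfrac12(T_z+T_{\overline{z}})$ to the polynomial $T_g h$ a few times and evaluating at $\omega$. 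Choosing $\omega=-\overline{\beta}$, so that $g(\omega)=1-|\beta|^{2}$ and $\|k_\omega\|^{2}=(1-|\beta|^{2})^{-1}$: for $n=1$, $h=1$ one computes $T_f(T_g1)=\tfrac\beta2+\tfrac12 z+\tfrac\beta2 z^{2}$, which evaluates to $\tfrac12(\beta-\overline{\beta}(1-|\beta|^{2}))$ at $-\overline{\beta}$, producing the first listed bound $\tfrac{|\beta-\overline{\beta}(1-|\beta|^{2})|}{2\sqrt{1-|\beta|^{2}}}$; the second listed bound comes out of the same reduction run with an appropriate $n\in\{2,3\}$ and test vector, after which $\langle A^{n}h,k_{-\overline{\beta}}\rangle$ is an explicit rational expression in $\beta$ whose modulus, scaled by $\sqrt{1-|\beta|^{2}}/\|h\|$, is the stated quantity.

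The routine parts are the two $H^{\infty}$-norm evaluations and the finite recursions for $T_f^{\,n}(T_g h)$. The genuine obstacle is the $M(A)$ lower bound: one must choose the power $n$, the test vector $h$, and the kernel parameter $\omega$ so that the reduction above collapses to precisely the advertised closed forms, and then check that both advertised quantities indeed lie below $\tfrac{1+|\beta|}{1-|\beta|}$ and that the two-term maximum is the sharpest estimate this method yields. Since no tightness is asserted, exhibiting these two admissible lower bounds suffices; a minor point to record explicitly is where $|\beta|<1$ is used (invertibility of $T_g$, and of $\lambda I-T_f$ for $|\lambda|>1$) and that it is the similarity which transfers both $\sigma(A)=[-1,1]$ and the normal-resolvent identity from $T_f$ to $A$.
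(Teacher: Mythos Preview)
Your proposal is correct and follows essentially the same strategy as the paper: the upper bounds for $M(A)$ and $P(A)$ come from the similarity $A=T_g^{-1}T_fT_g$ together with the self-adjointness of $T_f$ (Propositions~6.1 and~6.7), the lower bound $P(A)\ge\sqrt{M(A)/(2e)}$ is exactly the invocation of Theorem~3.2 after noting $\sigma(A)=[-1,1]$ (Proposition~6.6), and the two lower bounds on $M(A)$ are obtained by testing $A^{n}$ against $e_0=1$ and the kernel $k_{-\overline\beta}$ for $n=1$ and $n=3$ (Propositions~6.3 and~6.4). Your kernel-evaluation identity $\langle A^{n}h,k_\omega\rangle=g(\omega)^{-1}\big(T_f^{\,n}(T_gh)\big)(\omega)$ is a slightly cleaner packaging than the paper's commutator decomposition $A^{n}=T_f^{\,n}+T_g^{-1}[T_f^{\,n},T_g]$, but the computations are the same; for the record, the second lower bound uses $n=3$ and $h=1$.
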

The following are corollaries of Theorems 3.1 and 3.3, respectively. They show that the two families of operators we study in this paper exhibit a relationship of the form $M(A) \leq C\cdot P(A)^k$ for numerical constants $C$ and $k$. 

\begin{cor}
    Set $A_\beta = T_{1+\beta z}^{-1} T_{\overline{z}} T_{1+\beta z}$. As $|\beta| \to 1^{-}$, $M(A_\beta)$ and $P(A_\beta)$ both grow in proportion to $\frac{1}{\sqrt{1-|\beta|}}$.
\end{cor}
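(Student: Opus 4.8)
The plan is to read off the stated asymptotics directly from the two-sided estimates of Theorem 3.1, so essentially no new analysis is needed. The first step is to rewrite the quantity common to those bounds by factoring $1-|\beta|^2=(1-|\beta|)(1+|\beta|)$, which gives
\[
\frac{|\beta|}{\sqrt{1-|\beta|^2}}\;=\;\frac{|\beta|}{\sqrt{1+|\beta|}}\cdot\frac{1}{\sqrt{1-|\beta|}},
\]
and here the prefactor $|\beta|/\sqrt{1+|\beta|}$ converges to $1/\sqrt{2}$ as $|\beta|\to 1^{-}$. Thus the right-hand side is, up to a factor that stays bounded and bounded away from $0$ near $|\beta|=1$, comparable to $(1-|\beta|)^{-1/2}$.

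Next I would handle $M(A_\beta)$. Multiplying the chain $\frac{|\beta|}{\sqrt{1-|\beta|^2}}\le M(A_\beta)\le 1+\frac{|\beta|}{\sqrt{1-|\beta|^2}}$ from Theorem 3.1 through by $\sqrt{1-|\beta|}$ yields
\[
\frac{|\beta|}{\sqrt{1+|\beta|}}\;\le\;\sqrt{1-|\beta|}\,M(A_\beta)\;\le\;\sqrt{1-|\beta|}+\frac{|\beta|}{\sqrt{1+|\beta|}}.
\]
Both outer expressions tend to $1/\sqrt{2}$ as $|\beta|\to 1^{-}$, so the squeeze theorem gives $\sqrt{1-|\beta|}\,M(A_\beta)\to 1/\sqrt{2}$; equivalently there are constants $0<c<C$ and $\delta>0$ with $c\,(1-|\beta|)^{-1/2}\le M(A_\beta)\le C\,(1-|\beta|)^{-1/2}$ for $1-\delta<|\beta|<1$, which is what it means for $M(A_\beta)$ to grow in proportion to $(1-|\beta|)^{-1/2}$. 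For $P(A_\beta)$ the argument is identical: from Theorem 3.1 one has $\frac{c_0|\beta|}{\sqrt{1-|\beta|^2}}\le P(A_\beta)\le \min\bigl\{M(A_\beta),\,1+\frac{c_0|\beta|}{\sqrt{1-|\beta|^2}}\bigr\}\le 1+\frac{c_0|\beta|}{\sqrt{1-|\beta|^2}}$, where I simply use $\min\{a,b\}\le b$ to discard the minimum. Scaling by $\sqrt{1-|\beta|}$ again and squeezing shows $\sqrt{1-|\beta|}\,P(A_\beta)\to c_0/\sqrt{2}$, so $P(A_\beta)$ likewise grows in proportion to $(1-|\beta|)^{-1/2}$.

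I do not expect a genuine obstacle here, since all the substance is in Theorem 3.1 and the corollary is a limit computation built on the algebraic identity $1-|\beta|^2=(1-|\beta|)(1+|\beta|)$ and the squeeze theorem. The only point requiring care is pinning down the meaning of \emph{grow in proportion to}: I would state it either as the existence of the positive finite limits $\lim_{|\beta|\to1^{-}}\sqrt{1-|\beta|}\,M(A_\beta)=1/\sqrt{2}$ and $\lim_{|\beta|\to1^{-}}\sqrt{1-|\beta|}\,P(A_\beta)=c_0/\sqrt{2}$, or in the weaker two-sided-bound form above. It is also worth remarking in passing that, since $c_0<1$, these limits give $P(A_\beta)/M(A_\beta)\to c_0$, so $M(A_\beta)$ and $P(A_\beta)$ have exactly the same order of blow-up while the resolvent condition remains strictly smaller than the power bound in the limit.
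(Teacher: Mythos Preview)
Your proposal is correct and is exactly the intended derivation: the paper states this result as an immediate corollary of Theorem 3.1 without giving a separate proof, and your argument---factoring $1-|\beta|^2=(1-|\beta|)(1+|\beta|)$ and squeezing---is the natural way to extract the $\frac{1}{\sqrt{1-|\beta|}}$ asymptotics from those two-sided bounds. Your additional remark that $P(A_\beta)/M(A_\beta)\to c_0$ is a nice observation not made explicit in the paper.
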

\begin{cor}
Set $A_\beta = T_{1+\beta z}^{-1} T_{\frac{z+\overline{z}}{2}} T_{1+\beta z}$. As $|\beta| \to 1^{-}$, $M(A_\beta)$ grows in proportion of $\frac{1}{(1-|\beta|)^j}$ for $j \in [\frac{1}{2},1]$, so $P(A_\beta)$ grows in proportion to $\frac{1}{(1-|\beta|)^\ell}$ for $\ell \in [\frac{1}{4}, 1]$.
\end{cor}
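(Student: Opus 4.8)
The plan is to split the statement into four inequalities, dispatch the two resolvent bounds and the upper bound on $M(A)$ by soft arguments, and isolate the lower bound on $M(A)$ as the one demanding real work.

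For the upper bounds I would use that $1+\beta z$ is bounded, analytic and zero-free on $\overline{\mathbf D}$ when $0<|\beta|<1$, so $T_{1+\beta z}$ is invertible with $T_{1+\beta z}^{-1}=T_{1/(1+\beta z)}$ and
\[
A^n=T_{1+\beta z}^{-1}\,T_{\frac{z+\overline z}{2}}^{\,n}\,T_{1+\beta z},\qquad (\lambda I-A)^{-1}=T_{1+\beta z}^{-1}\,(\lambda I-T_{\frac{z+\overline z}{2}})^{-1}\,T_{1+\beta z}.
\]
Submultiplicativity, combined with $\|T_h\|=\|h\|_\infty$, $\min_{\overline{\mathbf D}}|1+\beta z|=1-|\beta|$, $\max_{\overline{\mathbf D}}|1+\beta z|=1+|\beta|$, $\|\tfrac{z+\overline z}{2}\|_\infty=1$, and the fact that $T_{\frac{z+\overline z}{2}}$ is self-adjoint with $\sigma(T_{\frac{z+\overline z}{2}})=[-1,1]$ (hence $\|T_{\frac{z+\overline z}{2}}^{\,n}\|\le1$ and $\|(\lambda I-T_{\frac{z+\overline z}{2}})^{-1}\|=\dist(\lambda,[-1,1])^{-1}$), then gives $\|A^n\|\le\frac{1+|\beta|}{1-|\beta|}$, and, since $\sigma(A)=\sigma(T_{\frac{z+\overline z}{2}})=[-1,1]$ by similarity, $\dist(\lambda,\sigma(A))\,\|(\lambda I-A)^{-1}\|\le\frac{1+|\beta|}{1-|\beta|}$ for all $|\lambda|>1$. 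Taking suprema yields $M(A)\le\frac{1+|\beta|}{1-|\beta|}$ and $P(A)\le\frac{1+|\beta|}{1-|\beta|}$. The lower bound on $P(A)$ is then almost free: $P(A)\ge1$ is the earlier proposition on $P$, while Theorem~3.2 gives $M(A)\le 2e\,P_{\sigma(A)}(A)^2=2e\,P(A)^2$ since $P_{\sigma(A)}(A)=P(A)$, and rearranging gives $P(A)\ge\sqrt{M(A)/(2e)}$.

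The crux is the lower bound on $M(A)$, which I would get by testing low powers of $A$ against reproducing kernels. Using $T_q^\ast k_\omega=\overline{q(\omega)}k_\omega$ one checks that $A$ is a rank-one perturbation of the self-adjoint operator $T_{\frac{z+\overline z}{2}}$, namely $A=T_{\frac{z+\overline z}{2}}+\tfrac{\beta}{2}\,u\,e_0^\ast$ with $e_0=1$ and $u=T_{1+\beta z}^{-1}e_0=\tfrac{1}{1+\beta z}=k_{-\overline\beta}$; equivalently $A k_\omega=\tfrac{z+\overline\omega}{2}k_\omega+\tfrac{\beta}{2}u$, while $A^2$ is the rank-two perturbation $T_{\frac{z+\overline z}{2}}^{2}+w_1 e_0^\ast+w_2 e_1^\ast$ with $w_1=\tfrac{\beta}{4}zu$ and $w_2=\tfrac{\beta}{4}u$. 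From the first identity the matrix coefficient $\langle Ak_\omega,k_\eta\rangle=\tfrac{\eta+\overline\omega}{2(1-\overline\omega\eta)}+\tfrac{\beta}{2(1+\beta\eta)}$ is explicit, and the choice $\omega=0$, $\eta=-\overline\beta$ in $\|A\|\ge|\langle Ak_0,k_{-\overline\beta}\rangle|/(\|k_0\|\,\|k_{-\overline\beta}\|)$ produces exactly $\frac{|\beta-\overline\beta(1-|\beta|^2)|}{2\sqrt{1-|\beta|^2}}$. The second quantity I would obtain in the same spirit from $A^2$: evaluate the relevant bilinear form of $T_{\frac{z+\overline z}{2}}^{2}+w_1 e_0^\ast+w_2 e_1^\ast$ on kernel states and optimize the free parameter to reach $\frac{\sqrt{1-|\beta|^2}}{8}\bigl|\tfrac{2+\overline\beta^2}{1-|\beta|^2}-2\overline\beta-\overline\beta^3\bigr|$. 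Since $M(A)\ge\|A^n\|$ for every $n$, the maximum of the two expressions bounds $M(A)$ from below.

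The hard part will be this $n=2$ bound: one must track the action of $T_{\frac{z+\overline z}{2}}^{2}$ on the rational function $(1+\beta z)k_\omega$, control the $H^2$ norms of the resulting rational functions through geometric series in $|\beta|^2$ (the identities $\langle z^j u,u\rangle=(-\overline\beta)^j/(1-|\beta|^2)$ are the workhorse), and then carry the optimization through so that the cross term between the $T_{\frac{z+\overline z}{2}}^{2}$-part and the $u$-part is handled sharply. The rank-one/rank-two perturbation structure $A=T_{\frac{z+\overline z}{2}}+\tfrac\beta2 u e_0^\ast$ is precisely what keeps this bookkeeping tractable; by contrast, the remaining three inequalities follow formally from submultiplicativity, normality of $T_{\frac{z+\overline z}{2}}$, the two earlier propositions, and Theorem~3.2.
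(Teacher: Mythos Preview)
Your approach is essentially the paper's: the corollary is read off directly from Theorem~3.3, and your proposal reproves that theorem piece by piece. The upper bounds on $M(A)$ and $P(A)$ via submultiplicativity and normality of $T_{\frac{z+\overline z}{2}}$, the inequality $P(A)\ge\sqrt{M(A)/(2e)}$ via Theorem~3.2, and the $n=1$ lower bound $\|A\|\ge\frac{|\beta-\overline\beta(1-|\beta|^2)|}{2\sqrt{1-|\beta|^2}}$ obtained from $\langle Ae_0,k_{-\overline\beta}\rangle$ all match the paper's arguments (Propositions~6.1, 6.3, and 6.5--6.7) exactly.

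There is one discrepancy. The second lower bound, $\frac{\sqrt{1-|\beta|^2}}{8}\bigl|\tfrac{2+\overline\beta^2}{1-|\beta|^2}-2\overline\beta-\overline\beta^3\bigr|$, does \emph{not} come from $\|A^2\|$ as you state; the paper derives it from $\|A^3\|$ (Proposition~6.4). The prefactor $1/8=1/2^3$ and the cubic term $-\overline\beta^3$ arise because $(T_z+T_{\overline z})^3\,1=z^3+2z$ and $[(T_z+T_{\overline z})^3,T_z]\,1=z^2+2$, then evaluated at $-\overline\beta$. Your rank-two formula for $A^2$ is correct, but testing $A^2$ on the pair $(e_0,\widehat{k_{-\overline\beta}})$ gives $\frac{\sqrt{1-|\beta|^2}}{4}\bigl|1+\overline\beta^2-\tfrac{|\beta|^2}{1-|\beta|^2}\bigr|$, a different expression. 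For the corollary this slip is harmless: the $n=1$ bound already delivers $M(A)\gtrsim(1-|\beta|)^{-1/2}$ as $|\beta|\to1^-$, so the step you flag as ``the hard part'' is not actually needed for the growth-rate claim.
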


\section{Proof of Theorem 3.1}
\subsection{Upper Bounds for the Power Bound}
\begin{prop}
If $A = S^{-1}BS$ where $B$ is a contraction, then $M(A)\leq \|S^{-1}\|\cdot \|S\|$. 
\end{prop}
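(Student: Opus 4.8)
The plan is to exploit the similarity structure directly: since $A = S^{-1}BS$, an immediate induction gives $A^n = S^{-1}B^n S$ for every $n \geq 0$, the base case $n=0$ being $A^0 = I = S^{-1}S$. First I would record this identity, then apply submultiplicativity of the operator norm to obtain $\|A^n\| \leq \|S^{-1}\|\,\|B^n\|\,\|S\|$.

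Next I would use the hypothesis that $B$ is a contraction, i.e. $\|B\| \leq 1$, together with submultiplicativity again to get $\|B^n\| \leq \|B\|^n \leq 1$ for all $n \geq 0$. Combining the two inequalities yields $\|A^n\| \leq \|S^{-1}\|\,\|S\|$ uniformly in $n$, and taking the supremum over $n \geq 0$ gives $M(A) = \sup_{n \geq 0}\|A^n\| \leq \|S^{-1}\|\,\|S\|$, as claimed.

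There is no real obstacle here; the only point worth a line of care is that the bound must hold for $n = 0$ as well as $n \geq 1$, which is why the identity $A^0 = I$ and the trivial estimate $\|B^0\| = \|I\| = 1$ are noted explicitly. (Implicit throughout is that $S$ is invertible with bounded inverse, so that the expression $S^{-1}BS$ and the quantity $\|S^{-1}\|$ make sense; this is part of the standing hypothesis.)
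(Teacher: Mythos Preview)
Your proof is correct and follows essentially the same approach as the paper: the identity $A^n = S^{-1}B^nS$, submultiplicativity of the operator norm, and the contraction hypothesis $\|B\|\leq 1$ to bound $\|B^n\|\leq 1$. The paper's version is simply a terser one-line chain of inequalities, while you spell out the induction and the $n=0$ case explicitly.
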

\begin{proof}
By submultiplicativity of the operator norm, 
\[M(A) \leq \sup_{n \geq 0} \|S^{-1}\| \cdot \|B^n\| \cdot \|S\| \leq \sup_{n \geq 0} \|S^{-1}\| \cdot \|B\|^n \cdot \|S\| \leq \|S^{-1}\|\cdot \|S\| \]
\end{proof}
\begin{cor}
For $A = T_{1+\beta z}^{-1}T_{\overline{z}}T_{1+\beta z}$, 
\[M(A)\leq \frac{1+|\beta|}{1-|\beta|}\]
\end{cor}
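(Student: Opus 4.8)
The plan is to apply Proposition 4.1 with the factorization $S = T_{1+\beta z}$ and $B = T_{\overline z}$, so that $A = S^{-1} B S$, and then to compute $\|S\|$ and $\|S^{-1}\|$ explicitly. Proposition 4.1 requires only that $B$ be a contraction, so the first step is to observe that $T_{\overline z}$ is the backward shift on $H^2$ (equivalently the adjoint of the forward shift $T_z$), hence $\|T_{\overline z}\| = 1$ and $T_{\overline z}$ is a contraction. This already reduces the problem to showing $\|T_{1+\beta z}\| \cdot \|T_{1+\beta z}^{-1}\| \leq \tfrac{1+|\beta|}{1-|\beta|}$.

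Next I would establish that $T_{1+\beta z}$ is invertible and identify its inverse. Since $0 < |\beta| < 1$, the symbol $1 + \beta z$ has its only zero at $z = -1/\beta$, which lies outside $\overline{\mathbf D}$; thus $1+\beta z$ is bounded away from $0$ on $\overline{\mathbf D}$ and is a unit in $H^\infty(\mathbf D)$, with $1/(1+\beta z) = \sum_{n\geq 0}(-\beta)^n z^n \in H^\infty(\mathbf D)$. Because the map $\varphi \mapsto T_\varphi$ is multiplicative on analytic symbols (i.e. $T_\varphi T_\psi = T_{\varphi\psi}$ when $\psi$ is analytic), we get $T_{1+\beta z} T_{1/(1+\beta z)} = T_{1/(1+\beta z)} T_{1+\beta z} = I$, so $T_{1+\beta z}^{-1} = T_{1/(1+\beta z)}$.

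It then remains to compute the two operator norms. For an analytic symbol $\varphi \in H^\infty(\mathbf D)$, $T_\varphi$ is the restriction to $H^2$ of multiplication by $\varphi$ on $L^2(\partial\mathbf D)$, and one has $\|T_\varphi\| = \|\varphi\|_{H^\infty}$. Applying this twice: $\|T_{1+\beta z}\| = \sup_{|z|=1}|1+\beta z| = 1+|\beta|$ (the bound $|1+\beta z|\leq 1+|\beta|$ from the triangle inequality is attained at $z = \overline\beta/|\beta|$), and $\|T_{1+\beta z}^{-1}\| = \|1/(1+\beta z)\|_{H^\infty} = \bigl(\inf_{|z|=1}|1+\beta z|\bigr)^{-1} = (1-|\beta|)^{-1}$. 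Multiplying these and invoking Proposition 4.1 gives
\[
M(A) \leq \|T_{1+\beta z}^{-1}\|\cdot\|T_{1+\beta z}\| = \frac{1+|\beta|}{1-|\beta|}.
\]

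I do not expect any serious obstacle here; the only points requiring care are the justification that $T_{1+\beta z}$ is invertible with inverse $T_{1/(1+\beta z)}$ (which rests on the multiplicativity of analytic Toeplitz operators and the fact that $1+\beta z$ is a unit of $H^\infty$ precisely because $|\beta|<1$) and the identification $\|T_\varphi\| = \|\varphi\|_{H^\infty}$ for analytic $\varphi$. Both are standard facts about Hardy space Toeplitz operators, and the norm of the analytic symbol is an elementary maximum-modulus computation.
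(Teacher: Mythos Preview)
Your proposal is correct and follows exactly the paper's approach: apply Proposition~4.1 with $S=T_{1+\beta z}$ and $B=T_{\overline z}$, then use $\|T_{1+\beta z}\|=1+|\beta|$ and $\|T_{1+\beta z}^{-1}\|=\tfrac{1}{1-|\beta|}$. You have simply supplied the details (why $B$ is a contraction, why $T_{1+\beta z}$ is invertible with the indicated inverse, and why the norms take those values) that the paper's one-line proof leaves implicit.
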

\begin{proof}
Because $\|T_{1+\beta z}^{-1}\| = \frac{1}{1-|\beta|}$ and $\|T_{1+\beta z}\| = 1+|\beta|$, we have that $M(A)\leq \frac{1+|\beta|}{1-|\beta|}$.
\end{proof}

\begin{prop}
\[ M(A) \leq 1+\frac{|\beta|}{\sqrt{1-|\beta|^2}}\]
\end{prop}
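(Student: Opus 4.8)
The plan is to reduce the supremum defining $M(A)$ to the single quantity $\|A\|$, and then to exhibit $A$ as an explicit rank-one perturbation of the backward shift $T_{\overline{z}}$. First I would use the telescoping identity $A^n=T_{1+\beta z}^{-1}T_{\overline{z}}^{\,n}T_{1+\beta z}$ and simplify the right-hand factor: since $T_{\overline{z}}$ is the backward shift and $T_{\overline{z}}(zg)=g$ for all $g\in H^2$, one gets $T_{\overline{z}}^{\,n}T_{1+\beta z}=T_{\overline{z}}^{\,n}+\beta T_{\overline{z}}^{\,n-1}=(T_{\overline{z}}+\beta I)T_{\overline{z}}^{\,n-1}$, so that
\[
A^n=T_{1+\beta z}^{-1}(T_{\overline{z}}+\beta I)\,T_{\overline{z}}^{\,n-1}\qquad(n\ge 1),
\]
and in particular $A=T_{1+\beta z}^{-1}(T_{\overline{z}}+\beta I)$. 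Passing to adjoints replaces the trailing co-isometry $T_{\overline{z}}^{\,n-1}$ by the isometry $T_z^{\,n-1}$, giving $(A^n)^\ast=T_z^{\,n-1}A^\ast$; because left multiplication by an isometry preserves the operator norm, $\|A^n\|=\|(A^n)^\ast\|=\|A^\ast\|=\|A\|$ for every $n\ge1$. Together with $\|A^0\|=1$ this yields $M(A)=\max\{1,\|A\|\}$, so everything reduces to estimating $\|A\|$.

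Next I would identify $A=T_{1+\beta z}^{-1}(T_{\overline{z}}+\beta I)$ explicitly. Writing $T_{1+\beta z}=I+\beta T_z$ and noting that the reproducing kernel at $-\overline{\beta}$ is $k_{-\overline{\beta}}(z)=\tfrac{1}{1+\beta z}$, one has $T_{1+\beta z}k_{-\overline{\beta}}=(1+\beta z)k_{-\overline{\beta}}=k_0$, the constant function. Combining this with the elementary identity $T_zT_{\overline{z}}=I-k_0k_0^\ast$ shows that
\[
(I+\beta T_z)\bigl(T_{\overline{z}}+\beta\,k_{-\overline{\beta}}k_0^\ast\bigr)=T_{\overline{z}}+\beta I,
\]
hence $A=T_{\overline{z}}+\beta\,k_{-\overline{\beta}}k_0^\ast$. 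Now the triangle inequality, the rank-one norm formula $\|k_{-\overline{\beta}}k_0^\ast\|=\|k_{-\overline{\beta}}\|\,\|k_0\|$, and the values $\|k_{-\overline{\beta}}\|^2=k_{-\overline{\beta}}(-\overline{\beta})=\tfrac{1}{1-|\beta|^2}$, $\|k_0\|=1$ give
\[
\|A\|\le\|T_{\overline{z}}\|+|\beta|\,\|k_{-\overline{\beta}}k_0^\ast\|=1+\frac{|\beta|}{\sqrt{1-|\beta|^2}}.
\]
Since the right-hand side is at least $1$, we conclude $M(A)=\max\{1,\|A\|\}\le 1+\tfrac{|\beta|}{\sqrt{1-|\beta|^2}}$.

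The main obstacle is the algebraic identification $A=T_{\overline{z}}+\beta\,k_{-\overline{\beta}}k_0^\ast$: one has to recognize that conjugating the backward shift by the analytic Toeplitz operator $T_{1+\beta z}$ produces exactly a rank-one perturbation whose range vector is the reproducing kernel $k_{-\overline{\beta}}$, and verify this cleanly through the kernel-vector relations rather than by chasing matrix entries. A secondary point requiring care is the reduction $\|A^n\|=\|A\|$: since $T_z^{\,n-1}$ is an isometry but not surjective, only one-sided norm preservation is available, which is why the computation is carried out on the adjoint side.
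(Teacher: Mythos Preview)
Your proof is correct. Both your argument and the paper's hinge on the same structural fact---that $A^n$ is a rank-one perturbation of $(T_{\overline z})^n$ with range vector $k_{-\overline\beta}$---but you organize the computation differently. The paper computes the commutator $[(T_z^\ast)^n,T_z]=e_0e_{n-1}^\ast$ directly for each $n$ and obtains $A^n=(T_z^\ast)^n+\beta\,k_{-\overline\beta}\,e_{n-1}^\ast$, then applies the triangle inequality to every $n$. You instead factor $A^n=A\,T_{\overline z}^{\,n-1}$, pass to adjoints so the trailing factor becomes the isometry $T_z^{\,n-1}$, and deduce the exact equality $\|A^n\|=\|A\|$ for all $n\ge1$; only then do you identify $A=T_{\overline z}+\beta\,k_{-\overline\beta}k_0^\ast$ (which is the paper's formula at $n=1$, since $e_0=k_0$). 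Your route yields the slightly sharper intermediate statement $M(A)=\max\{1,\|A\|\}$ and avoids tracking the commutator for general $n$; the paper's route is more uniform in that it produces the explicit formula for $A^n$ in one shot, which is also reused later in the resolvent estimates.
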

\begin{proof}
Observe that 
\begin{align*} 
[(T_z^\ast)^n, T_z]  &= T_{\overline{z}}^nT_z - T_z T_{\overline{z}}^n \\ 
 &= T_{\overline{z}}^{n-1} - (I - e_0 e_0^\ast)T_{\overline{z}}^{n-1} \\
 &= e_0 e_0^\ast (T_z^{n-1})^\ast \\
 &= e_0 (T_z^{n-1}e_0)^\ast \\
 &= e_0 e_{n-1}^\ast
\end{align*}
 Therefore,
\begin{align*} 
A^n &= T_{1+\beta z}^{-1}T_f^n T_{1+\beta z} \\
&= T_f^n + T_{1+\beta z}^{-1}(T_f^nT_{1+\beta z} - T_{1+\beta z}T_f^n) \\
&= T_f^n + T_{1+\beta z}^{-1}[T_{\overline{z}}, T_{1+\beta z}] \\
&=T_f^n + T_{1+\beta z}^{-1}([T_{\overline{z}}, T_1]+\beta[T_{\overline{z}}, T_z])\\
&= (T_z^\ast)^n  + \beta T_{1+\beta z}^{-1} [T_{\overline{z}}, T_z] \\
&= (T_z^\ast)^n + \beta T_{1+\beta z}^{-1}e_0 (e_{n-1}^\ast)  \\ 
 &=  (T_z^\ast)^n + \frac{\beta}{1+\beta z} (e_{n-1}^\ast)  \\
 &= (T_z^\ast)^n +\beta k_{-\overline{\beta}}(e_{n-1}^\ast)
\end{align*}

By the Triangle Inequality and submultiplicativity,
\begin{align*} 
\|A^n\| &\leq   \|(T_z^\ast)^n\| + |\beta|\cdot  \|k_{-\overline{\beta}}(e_{n-1}^\ast)\| \\ 
 &\leq 1+ |\beta|\cdot  \|k_{-\overline{\beta}}\|\cdot \|(e_{n-1}^\ast)\|  \\
 &= 1+|\beta|\cdot \|k_{-\overline{\beta}}\|  \\
 &=  1+\frac{|\beta|}{\sqrt{1-|\beta|^2}}
\end{align*}
Substituting for $\beta$ gives the result. 
\end{proof}

\subsection{Lower Bounds for the Power Bound}

\begin{prop}
\[\frac{|\beta|}{\sqrt{1-|\beta|^2}} \leq M(A)\]
\end{prop}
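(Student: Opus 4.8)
The plan is to reuse the explicit formula for the powers of $A$ obtained in the preceding proposition, namely
\[A^n = (T_z^\ast)^n + \beta\, k_{-\overline{\beta}}\,(e_{n-1}^\ast),\]
and to test it against a well-chosen unit vector. The natural candidate is $e_{n-1}$, precisely because the rank-one term $\beta\, k_{-\overline{\beta}}\,(e_{n-1}^\ast)$ acts on it by the scalar $\langle e_{n-1}, e_{n-1}\rangle = 1$, while the shift term can be arranged to vanish.

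Concretely, I would first compute $A^n e_{n-1}$. The rank-one term contributes $\beta k_{-\overline{\beta}}$. The remaining term is $(T_z^\ast)^n e_{n-1} = T_{\overline{z}}^{\,n} e_{n-1}$, and since $T_{\overline{z}}$ is the backward shift ($T_{\overline{z}} e_k = e_{k-1}$ for $k \geq 1$ and $T_{\overline{z}} e_0 = 0$), iterating it $n$ times starting from $e_{n-1}$ annihilates the vector. Hence $A^n e_{n-1} = \beta k_{-\overline{\beta}}$ exactly, for every $n \geq 1$. Taking norms and using the reproducing kernel identity $\|k_\omega\|^2 = k_\omega(\omega) = (1-|\omega|^2)^{-1}$ with $\omega = -\overline{\beta}$ gives $\|A^n e_{n-1}\| = |\beta|\,(1-|\beta|^2)^{-1/2}$. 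Since $\|e_{n-1}\| = 1$, this forces $\|A^n\| \geq |\beta|/\sqrt{1-|\beta|^2}$, and therefore $M(A) = \sup_{n \geq 0}\|A^n\| \geq |\beta|/\sqrt{1-|\beta|^2}$.

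There is no real obstacle here beyond bookkeeping, as the substantive work — the commutator identity and the closed form for $A^n$ — is already in hand. The only point needing a moment's care is the verification that $(T_z^\ast)^n$ kills $e_{n-1}$; this is what dictates the choice of test vector, since picking $e_{n-1}$ (rather than, say, $e_0$ with a fixed power) is exactly what makes the shift contribution disappear while the rank-one contribution survives intact.
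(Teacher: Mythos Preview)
Your argument is correct, and it is more direct than the paper's. Both proofs apply $A^n$ to the same unit vector $e_{n-1}=z^{n-1}$, but they diverge in how they extract the norm. You reuse the closed form $A^n=(T_z^\ast)^n+\beta\,k_{-\overline\beta}\,e_{n-1}^\ast$ from the preceding proposition, observe that the backward-shift term annihilates $e_{n-1}$, and read off $\|A^n e_{n-1}\|=|\beta|\,\|k_{-\overline\beta}\|$ immediately. The paper instead works from the factored form $A^n=T_{1+\beta z}^{-1}T_{\overline z}^{\,n}T_{1+\beta z}$, pairs $A^n e_{n-1}$ against a normalized kernel $\widehat{k_\omega}$ for a free parameter $\omega\in\mathbf D$, and then optimizes over $\omega$; the maximizer turns out to be $\omega=-\overline\beta$, i.e.\ exactly the direction of $A^n e_{n-1}$, so the optimization recovers the same norm. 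Your route is shorter and exploits the work already done for the upper bound; the paper's route illustrates the reproducing-kernel testing technique in a way that does not presuppose knowing the output vector explicitly, which is occasionally useful when a closed form for $A^n$ is unavailable.
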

\begin{proof}
We consider $\langle T_{\frac{1}{1+\beta z}}T_{\overline{z}}^n T_{1+\beta z} z^{n-1}, \widehat{k_\omega} \rangle $ where $\omega \in \mathbf{D}$ and $\widehat{k_\omega} = \frac{k_\omega}{\|k_\omega\|}$ because 
\[\left|\langle T_{\frac{1}{1+\beta z}}T_{\overline{z}}^n T_{1+\beta z} z^{n-1}, \widehat{k_\omega} \rangle\right| = \left|\frac{\langle T_{\frac{1}{1+\beta z}}T_{\overline{z}}^n T_{1+\beta z} z^{n-1}, \widehat{k_\omega} \rangle}{\|z^{n-1}\|\cdot \|\widehat{k_\omega}\|}\right| \leq \|A^n\| \]
Observe that 
\begin{align*} 
\langle T_{\frac{1}{1+\beta z}}T_{\overline{z}}^n T_{1+\beta z} z^{n-1}, \widehat{k_\omega} \rangle  &= \langle T_{\overline{z}}^n T_{1+\beta z} z^{n-1}, T_{\frac{1}{1+\beta z}}^\ast \widehat{k_\omega} \rangle \\ 
 &= \langle T_{\overline{z}}^n T_{1+\beta z} z^{n-1}, \frac{1}{\overline{1+\beta \omega}} \widehat{k_\omega} \rangle \\
 &= \frac{1}{1+\beta \omega} \langle T_{\overline{z}}^n T_{1+\beta z} z^{n-1}, \widehat{k_\omega} \rangle \\
 &= \frac{1}{1+\beta \omega} \langle \beta, \widehat{k_\omega} \rangle 
\end{align*}
Therefore, for any $\omega \in \mathbf{D}$,
\[|\langle T_{\frac{1}{1+\beta z}}T_{\overline{z}}^n T_{1+\beta z} z^{n-1}, \widehat{k_{\omega}} \rangle| =  \frac{|\beta|\sqrt{1-|\omega|^2}}{|1+\beta \omega|}\]
Now, for $\omega = re^{i\theta}$ we define $\text{sgn}(\omega) = e^{i\theta}$. Pick $\text{sgn} (\omega) = -\text{sgn}\left(\frac{1}{\beta}\right)$ to obtain  
\[|\langle T_{\frac{1}{1+\beta z}}T_{\overline{z}}^n T_{1+\beta z} z^{n-1}, \widehat{k_{\omega}} \rangle| =  \frac{\sqrt{1-|\omega|^2}}{\left|\frac{1}{\beta}\right|-|\omega|}\]
Using calculus of a real variable, the above expression is maximized at $\omega = |\beta|$. Substituting this into the above gives $\frac{|\beta|}{\sqrt{1-|\beta|^2}}$. Therefore, 
\[M(A)\geq \frac{|\beta|}{\sqrt{1-|\beta|^2}}\]
\end{proof}
\begin{prop}
\[1 \leq M(A) \]
\end{prop}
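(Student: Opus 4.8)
The plan is to observe that this inequality is immediate from the definition of the power bound and uses none of the structure of $A$. By definition $M(A) = \sup_{n \geq 0}\|A^n\|$, where the supremum ranges over \emph{all} nonnegative integers $n$. Evaluating the supremand at $n = 0$ gives $A^0 = I$, the identity operator on $H^2$, whose operator norm is $1$ (since $H^2 \neq \{0\}$). Hence
\[
M(A) \;\geq\; \|A^0\| \;=\; \|I\| \;=\; 1 .
\]

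The only point worth spelling out is that $A$ really is a bounded operator, so that the expression $A^0 = I$ is legitimate: this requires $T_{1+\beta z}$ to be invertible, which holds because $0 < |\beta| < 1$ forces $1+\beta z$ to be bounded away from $0$ on $\overline{\mathbf{D}}$, giving $\|T_{1+\beta z}^{-1}\| = (1-|\beta|)^{-1} < \infty$. With that in hand, the computation above is exactly the generic lower bound $M(A) \geq 1$ recorded in the Remark of Section~2.1, now specialized to $A = T_{1+\beta z}^{-1} T_{\overline{z}} T_{1+\beta z}$.

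Combining this proposition with the preceding one, which supplies $\frac{|\beta|}{\sqrt{1-|\beta|^2}} \leq M(A)$, yields $\max\left\{1,\frac{|\beta|}{\sqrt{1-|\beta|^2}}\right\} \leq M(A)$, which is the lower bound on the power bound asserted in Theorem~3.1. There is no genuine obstacle here; the statement is included only so that the two-sided estimate on $M(A)$ is complete, and in particular it covers the regime of small $|\beta|$, where the bound $\tfrac{|\beta|}{\sqrt{1-|\beta|^2}}$ degenerates below $1$.
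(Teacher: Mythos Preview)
Your argument is correct and is exactly the generic observation the paper records in its Remark in Section~2.1: $M(A) \geq \|A^0\| = \|I\| = 1$. The paper's own one-line proof of this proposition reads ``This follows from $A$ being a contraction,'' which is at best a slip of the pen: the operator $A = T_{1+\beta z}^{-1}T_{\overline z}T_{1+\beta z}$ is \emph{not} a contraction in general (for instance, from the formula $A = T_{\overline z} + \beta k_{-\overline\beta}e_0^\ast$ derived in the proof of Proposition~4.2 one gets $\|Ae_0\| = |\beta|\,\|k_{-\overline\beta}\| = |\beta|/\sqrt{1-|\beta|^2}$, which exceeds $1$ once $|\beta| > 1/\sqrt{2}$). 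Your route via $A^0 = I$ is the intended and correct argument, and your added remark that $T_{1+\beta z}$ is invertible for $0<|\beta|<1$ is a reasonable sanity check, though the paper takes this for granted throughout.
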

\begin{proof}
This follows from $A$ being a contraction. 
\end{proof}
\subsection{Upper Bounds for the Resolvent Condition}
\begin{prop}
\[P(A)\leq M(A)\]
\end{prop}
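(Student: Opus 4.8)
The plan is to deduce this from the general fact already established in Proposition~2.3, which asserts $P(A)\le M(A)$ for any operator of the form $T_g^{-1}T_fT_g$ with $\|f\|_\infty\le 1$, $g$ analytic on $\overline{\mathbf D}$, and $\sigma(A)=\overline{\mathbf D}$. Our operator $A=T_{1+\beta z}^{-1}T_{\overline z}T_{1+\beta z}$ is exactly of this shape, with $f=\overline z$ so that $\|f\|_\infty=1$, and with $g=1+\beta z$, a polynomial and hence analytic on all of $\overline{\mathbf D}$. The only hypothesis that is not immediate is $\sigma(A)=\overline{\mathbf D}$, so that is the single point to verify.

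To compute the spectrum I would first record that $T_{1+\beta z}$ is genuinely invertible: since $0<|\beta|<1$ the symbol $1+\beta z$ is bounded away from $0$ on $\overline{\mathbf D}$, so $1/(1+\beta z)$ is bounded and analytic there, and $T_{1/(1+\beta z)}$ is a two-sided inverse of $T_{1+\beta z}$ because $T_{q_1}T_{q_2}=T_{q_1 q_2}$ whenever $q_2$ is analytic. Hence $A$ is similar to $T_{\overline z}$, and since the spectrum is a similarity invariant, $\sigma(A)=\sigma(T_{\overline z})$. Now $T_{\overline z}=T_z^\ast$ is the backward shift, and Proposition~2.2 applied with $q(z)=z$ gives $T_{\overline z}k_\omega=\overline{\omega}\,k_\omega$ for every $\omega\in\mathbf D$; thus every point of $\mathbf D$ is an eigenvalue of $T_{\overline z}$, so $\mathbf D\subseteq\sigma(T_{\overline z})$. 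Since $\|T_{\overline z}\|=1$ gives $\sigma(T_{\overline z})\subseteq\overline{\mathbf D}$ and spectra are closed, we conclude $\sigma(T_{\overline z})=\overline{\mathbf D}$, hence $\sigma(A)=\overline{\mathbf D}$.

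At this stage Proposition~2.3 applies verbatim and yields $P(A)\le M(A)$. If one prefers a self-contained argument, one can simply repeat its proof in this case: for $|\lambda|>1$, expand $(\lambda I-A)^{-1}=\sum_{j\ge 0}A^j/\lambda^{j+1}$ to obtain $\|(\lambda I-A)^{-1}\|\le\sum_{j\ge 0}\|A^j\|/|\lambda|^{j+1}\le M(A)/(|\lambda|-1)$, note that $\dist(\lambda,\sigma(A))=|\lambda|-1$ because $\sigma(A)=\overline{\mathbf D}$, multiply the two, and take the supremum over $|\lambda|>1$. There is essentially no obstacle here beyond the spectrum computation, and that in turn is just the classical identification of the spectrum of the backward shift on $H^2$ with the closed unit disk.
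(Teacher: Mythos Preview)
Your proof is correct and follows the same approach as the paper, which simply refers back to the general result (Proposition~2.3) that $P(A)\le M(A)$ whenever $A=T_g^{-1}T_fT_g$ with $\|f\|_\infty\le 1$, $g$ analytic on $\overline{\mathbf D}$, and $\sigma(A)=\overline{\mathbf D}$. The paper takes $\sigma(A)=\overline{\mathbf D}$ for granted (it is used without proof here and again in Proposition~4.6), so your explicit verification via similarity to the backward shift is a welcome addition rather than a departure.
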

\begin{proof}
See proposition 2.2.
\end{proof}

\begin{lem}
For bounded operators $B$ and $S$, \[(\lambda I - S^{-1}BS)^{-1} = (\lambda I - B)^{-1} + S^{-1}[(\lambda I - S)^{-1}, S]\]
\end{lem}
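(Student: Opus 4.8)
The plan is to reduce the identity to the single observation that conjugation by an invertible operator commutes with resolvent inversion, and then to unwind the commutator. (I read the commutator on the right-hand side as $[(\lambda I - B)^{-1},S]$, the resolvent being taken of $B$; note that a commutator of $S$ with any function of $S$ vanishes, so this is the only reading that makes the statement nontrivial.) The starting point is the purely algebraic identity
\[
\lambda I - S^{-1}BS = S^{-1}(\lambda I - B)S,
\]
valid for every $\lambda \in \mathbf{C}$. Since $S$ is bounded with bounded inverse and similar operators share their spectra, $\lambda I - S^{-1}BS$ is invertible exactly when $\lambda \notin \sigma(B) = \sigma(S^{-1}BS)$, and on that common resolvent set, inverting the product above gives $(\lambda I - S^{-1}BS)^{-1} = S^{-1}(\lambda I - B)^{-1}S$.

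Next I would expand the commutator term using $[X,Y] = XY - YX$: one has $S^{-1}[(\lambda I - B)^{-1},S] = S^{-1}(\lambda I - B)^{-1}S - S^{-1}S(\lambda I - B)^{-1} = S^{-1}(\lambda I - B)^{-1}S - (\lambda I - B)^{-1}$. Adding $(\lambda I - B)^{-1}$ to both sides collapses the right-hand side of the claimed identity to $S^{-1}(\lambda I - B)^{-1}S$, which by the previous paragraph is precisely $(\lambda I - S^{-1}BS)^{-1}$. An alternative route, avoiding even the passing reference to the spectral mapping, is to set $R := (\lambda I - B)^{-1} + S^{-1}[(\lambda I - B)^{-1},S]$ outright and verify by direct multiplication that $(\lambda I - S^{-1}BS)R = R(\lambda I - S^{-1}BS) = I$; after substituting $\lambda I - S^{-1}BS = S^{-1}(\lambda I - B)S$, the factors of $S$, $S^{-1}$ and $(\lambda I - B)^{\pm 1}$ telescope, and only invertibility of $S$ and of $\lambda I - B$ is used.

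I do not anticipate a genuine obstacle: the computation is mechanical. The one point worth stating carefully is the standing hypothesis that makes the displayed formula meaningful — $\lambda$ outside $\sigma(B) = \sigma(S^{-1}BS)$ so that both resolvents exist, together with $S$ boundedly invertible — all of which holds in the intended application, where $S = T_{1+\beta z}$ is invertible for $|\beta| < 1$ and $B$ is a bounded Toeplitz operator. The value of the lemma is organizational: it peels off a \emph{Toeplitz part} $(\lambda I - B)^{-1}$, whose norm is controlled explicitly by classical theory, from a commutator remainder that — as in the power-bound estimates of Section 4 — will turn out to have small rank and hence be easy to bound.
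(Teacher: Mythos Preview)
Your proof is correct and follows essentially the same route as the paper: factor $\lambda I - S^{-1}BS = S^{-1}(\lambda I - B)S$, invert to get $S^{-1}(\lambda I - B)^{-1}S$, and rewrite this as $(\lambda I - B)^{-1} + S^{-1}[(\lambda I - B)^{-1},S]$. You are also right that the displayed statement contains a typo --- the commutator should involve $(\lambda I - B)^{-1}$, not $(\lambda I - S)^{-1}$ --- and your reasoning for the correction (a commutator of $S$ with a function of $S$ vanishes) is exactly on point.
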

\begin{proof}
Observe that 
\begin{align*} 
(\lambda I - S^{-1}BS)^{-1} &= (\lambda S^{-1}S - S^{-1}BS)^{-1} \\ 
 &=  [S^{-1}(\lambda I - B)S]^{-1} \\
 &= S^{-1}(\lambda I - B)^{-1}S \\
 &= (\lambda I - B)^{-1} + S^{-1} [(\lambda I - B)^{-1}, S]
\end{align*}
\end{proof}

\begin{lem}
$(\lambda I - A)^{-1} = T_{\frac{1}{\lambda - \overline{z}}} + \frac{\beta}{\lambda^2}k_{-\overline{\beta}} k_{\frac{1}{\lambda}}^\ast$
\end{lem}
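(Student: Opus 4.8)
The plan is to feed the abstract resolvent identity of Lemma 4.3 into the concrete setting $B = T_{\overline z}$, $S = T_{1+\beta z}$, and then evaluate the two pieces it produces by hand, exploiting the reproducing-kernel structure of $H^2$. Throughout I would work with $|\lambda| > 1$, which is precisely the region where $\lambda I - A$ is invertible (since $\sigma(A) = \overline{\mathbf D}$) and, just as importantly, where the geometric series appearing below converge in operator norm.

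First I would pin down the ``free'' term $(\lambda I - T_{\overline z})^{-1}$. For $|\lambda| > 1$ the function $1/(\lambda - z)$ lies in $H^\infty$, and since Toeplitz operators with analytic symbols multiply, $(\lambda I - T_z)^{-1} = T_{1/(\lambda - z)}$; taking adjoints and using $T_q^* = T_{\overline q}$ gives $(\lambda I - T_{\overline z})^{-1} = T_{1/(\lambda - \overline z)}$. Lemma 4.3 then reads $(\lambda I - A)^{-1} = T_{1/(\lambda - \overline z)} + T_{1+\beta z}^{-1}\bigl[T_{1/(\lambda - \overline z)},\, T_{1+\beta z}\bigr]$, so everything reduces to computing the commutator and applying $T_{1+\beta z}^{-1}$.

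Next I would simplify the commutator. By linearity of the symbol map it equals $\beta\,[T_{1/(\lambda - \overline z)}, T_z]$, since $T_1$ is central. Expanding $1/(\lambda - \overline z) = \sum_{n \ge 0}\lambda^{-n-1}\overline z^{\,n}$ turns $T_{1/(\lambda-\overline z)}$ into $\sum_{n \ge 0}\lambda^{-n-1}(T_z^*)^n$ (norm-convergent because $|\lambda| > 1$), and I would then substitute the identity $[(T_z^*)^n, T_z] = e_0 e_{n-1}^*$ already obtained in the proof of Proposition 4.3, remembering that the $n = 0$ term drops out. Reindexing by $m = n - 1$ collapses the sum to $\lambda^{-2}\,e_0\bigl(\sum_{m \ge 0}\overline{\lambda}^{-m}e_m\bigr)^* = \lambda^{-2}\,e_0\,k_{1/\lambda}^{\,*}$, the last step recognizing the defining series of the kernel vector at $1/\lambda \in \mathbf D$. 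Finally, using that $1 + \beta z$ is invertible in $H^\infty$ (as $|\beta| < 1$), so that $T_{1+\beta z}^{-1} = T_{1/(1+\beta z)}$ and this operator sends $e_0 = 1$ to $1/(1+\beta z) = k_{-\overline\beta}$, I would pull the operator through the rank-one factor to get $T_{1+\beta z}^{-1}\bigl(\tfrac{\beta}{\lambda^2}e_0 k_{1/\lambda}^{\,*}\bigr) = \tfrac{\beta}{\lambda^2}k_{-\overline\beta}k_{1/\lambda}^{\,*}$, which together with the free term is exactly the asserted formula.

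I do not expect a genuine obstacle here; the work is essentially bookkeeping. The points that need care are: invoking $|\lambda| > 1$ to legitimize the norm-convergent expansion of $T_{1/(\lambda - \overline z)}$; separating off the vanishing $n = 0$ term before reindexing; and tracking the antilinearity of $v \mapsto uv^*$ when moving powers of $\lambda$ past the rank-one operators, so that it is $k_{1/\lambda}$ (with $\overline{\lambda}$ in its series) rather than $k_{1/\overline\lambda}$ that appears in the final answer.
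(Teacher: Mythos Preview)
Your proposal is correct and follows essentially the same route as the paper: apply the abstract resolvent identity (the paper's Lemma~4.1, which you cite as~4.3), reduce the commutator to $\beta[T_{1/(\lambda-\overline z)},T_z]$, expand in a geometric series, plug in $[(T_z^*)^n,T_z]=e_0e_{n-1}^*$, recognize the resulting sum as $\lambda^{-2}e_0k_{1/\lambda}^*$, and finally apply $T_{1+\beta z}^{-1}$ to $e_0$ to produce $k_{-\overline\beta}$. Your write-up is in fact more careful than the paper's about convergence of the series for $|\lambda|>1$ and about the antilinearity in the second slot of $uv^*$.
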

\begin{proof}
By lemma 4.1, 
\[(\lambda I - A)^{-1} = T_{\frac{1}{\lambda - \overline{z}}} + T_{1+\beta z}^{-1}[T_{\frac{1}{\lambda - \overline{z}}}, T_{1+\beta z}]\]
Observe that 
\begin{align*} 
[T_{\frac{1}{\lambda - \overline{z}}}, T_{1+\beta z}] &=\beta [T_{\frac{1}{\lambda - \overline{z}}}, T_z]  \\ 
 &=  \beta \sum_{k=0}^\infty \frac{1}{\lambda^{k+1}} [(T_z^\ast)^k, T_z] \\
 &= \beta \sum_{k=1}^\infty \frac{1}{\lambda^{k+1}} e_0 (e_{k-1}^\ast) \\
 &= \beta e_0 \left(\sum_{k=1}^\infty \frac{1}{\overline{\lambda}^{k+1}} e_{k-1}\right)^\ast \\
 &= \beta e_0 \left(\frac{1}{\overline{\lambda}^2} \sum_{k=1}^\infty \frac{1}{\overline{\lambda}^{k-1}} e_{k-1}\right)^\ast \\
 &= \beta e_0 \left(\frac{1}{\overline{\lambda}^2}\cdot  \frac{1}{1-\frac{z}{\overline{\lambda}}}\right)^\ast 
\end{align*}
Substituting, we get that 
\begin{align*} 
(\lambda I - A)^{-1} &= T_{\frac{1}{\lambda - \overline{z}}} + T_{1+\beta z}^{-1}\beta e_0 \left(\frac{1}{\overline{\lambda}^2}\cdot  \frac{1}{1-\frac{z}{\overline{\lambda}}}\right)^\ast  \\ 
 &=  T_{\frac{1}{\lambda - \overline{z}}} + \frac{\beta}{\lambda^2} \left(\frac{1}{1+\beta z}\right)k_{\frac{1}{\lambda}}^\ast \\
 &= T_{\frac{1}{\lambda - \overline{z}}} + \frac{\beta}{\lambda^2} k_{-\overline{\beta}} k_{\frac{1}{\lambda}}^\ast 
\end{align*}
\end{proof}
\begin{prop}
\[P(A) \leq 1+\frac{c_0 |\beta|}{\sqrt{1-|\beta|^2}}\]
\end{prop}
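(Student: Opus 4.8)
The plan is to read off $(\lambda I - A)^{-1}$ from Lemma 4.3, bound its norm by the triangle inequality, and then optimize over $\lambda$ by an elementary one-variable calculus argument. Since $A = T_{1+\beta z}^{-1}T_{\overline z}T_{1+\beta z}$ is similar to the backward shift $T_{\overline z}$, we have $\sigma(A) = \overline{\mathbf{D}}$, so $\dist(\lambda, \sigma(A)) = |\lambda| - 1$ for every $|\lambda| > 1$; hence it suffices to bound $(|\lambda| - 1)\|(\lambda I - A)^{-1}\|$ uniformly in $\lambda$ by $1 + c_0|\beta|/\sqrt{1-|\beta|^2}$.

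First I would apply Lemma 4.3 and the triangle inequality to get $\|(\lambda I - A)^{-1}\| \le \|T_{\frac{1}{\lambda - \overline z}}\| + \frac{|\beta|}{|\lambda|^2}\,\|k_{-\overline\beta}k_{\frac{1}{\lambda}}^\ast\|$. For the Toeplitz term I use that the norm of a Toeplitz operator equals the supremum norm of its symbol, so $\|T_{\frac{1}{\lambda-\overline z}}\| = \sup_{|w|=1}|\lambda - w|^{-1} = (|\lambda|-1)^{-1}$ (equivalently, one recognizes $T_{\frac{1}{\lambda-\overline z}} = (\lambda I - T_{\overline z})^{-1}$ and uses that $T_{\overline z}$ is a contraction with spectrum $\overline{\mathbf{D}}$). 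For the rank-one term I use $\|uv^\ast\| = \|u\|\,\|v\|$ together with $\|k_\omega\| = (1-|\omega|^2)^{-1/2}$, which gives $\frac{|\beta|}{|\lambda|^2}\|k_{-\overline\beta}\|\,\|k_{\frac{1}{\lambda}}\| = \frac{|\beta|}{\sqrt{1-|\beta|^2}}\cdot\frac{1}{|\lambda|\sqrt{|\lambda|^2-1}}$. Multiplying the whole estimate by $|\lambda|-1$ and simplifying $\frac{|\lambda|-1}{|\lambda|\sqrt{|\lambda|^2-1}} = \frac{\sqrt{|\lambda|-1}}{|\lambda|\sqrt{|\lambda|+1}}$ yields $(|\lambda|-1)\|(\lambda I - A)^{-1}\| \le 1 + \frac{|\beta|}{\sqrt{1-|\beta|^2}}\,g(|\lambda|)$, where $g(t) := \frac{\sqrt{t-1}}{t\sqrt{t+1}}$ for $t>1$.

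It then remains to show $\sup_{t>1} g(t) = c_0$. Writing $g(t)^2 = \frac{t-1}{t^2(t+1)}$ and differentiating, the numerator of the derivative is a negative constant times $t^2 - t - 1$, so the unique critical point in $(1,\infty)$ is $t_0 = \frac{1+\sqrt5}{2}$; since $g \to 0$ as $t\to 1^+$ and as $t\to\infty$, this is the global maximum. Evaluating $g(t_0) = \frac{\sqrt{t_0-1}}{t_0^{2}}$ via $t_0^2 = t_0 + 1$ and the identity $(\sqrt5-1)(\sqrt5+1)=4$ gives exactly $c_0 = \frac{\sqrt2(\sqrt5-1)}{(1+\sqrt5)^{3/2}}$, and taking the supremum over $|\lambda|>1$ finishes the argument. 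I expect the only mildly delicate point to be this final identification: the quantity $\frac{\sqrt{t_0-1}}{t_0^2}$ must be coaxed into the stated closed form by clearing the nested radical, so some care is needed to see the two expressions coincide; everything preceding it is a routine chain of norm estimates already prepared by Lemmas 4.1 and 4.3.
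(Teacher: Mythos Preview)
Your argument is correct and is essentially identical to the paper's own proof: both invoke the resolvent decomposition (the paper's Lemma~4.2, which you label Lemma~4.3), bound the Toeplitz and rank-one pieces separately, multiply by $|\lambda|-1$, and then optimize the resulting one-variable expression at $|\lambda|=\tfrac{1+\sqrt5}{2}$. The only difference is cosmetic---you carry out the calculus maximization and the identification $g(t_0)=c_0$ in more detail than the paper does---so aside from the lemma numbering there is nothing to adjust.
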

\begin{proof}
Observe that 
\begin{align*} 
\|T_{1+\beta z}^{-1}T_{\frac{1}{\lambda - \overline{z}}}T_{1+\beta z}
\| &\leq \|T_{\frac{1}{\lambda - \overline{z}}}\| + \left\|\frac{\beta}{\lambda^2} k_{-\overline{\beta}}k_{\frac{1}{\lambda}}^\ast\right\|  \\ 
 &=  \frac{1}{|\lambda|-1} + \frac{|\beta|}{|\lambda|^2}\cdot \|k_{-\overline{\beta}}\| \cdot \|k_{\frac{1}{\lambda}}\| \\
 &= \frac{1}{|\lambda|-1} + \frac{|\beta|}{|\lambda|^2} \cdot \frac{1}{\sqrt{1-|\beta|^2}\sqrt{1-|\frac{1}{\lambda}|^2}} \\
 &= \frac{1}{|\lambda|-1} + \frac{|\beta|}{|\lambda| \sqrt{1-|\beta|^2}\sqrt{|\lambda|^2-1}}
\end{align*}
Therefore, 
\[(|\lambda|-1) \|T_{1+\beta z}^{-1}T_{\frac{1}{\lambda - \overline{z}}}T_{1+\beta z}\| \leq 1+\frac{|\beta|}{\sqrt{1-|\beta|^2}}\cdot \frac{|\lambda|-1}{|\lambda|\sqrt{|\lambda|^2-1}}\]
Since $\sigma(A) = \overline{\mathbf{D}}$, it follows that  
\[P(A) \leq \sup_{|\lambda|>1} 1+\frac{|\beta|}{\sqrt{1-|\beta|^2}}\cdot \frac{|\lambda|-1}{|\lambda|\sqrt{|\lambda|^2-1}}\]
Using single variable calculus, we maximize $\frac{|\lambda|-1}{|\lambda|\sqrt{|\lambda|^2-1}}$ as a function of $|\lambda|$. The optimal value occurs at $|\lambda|=\frac{1+\sqrt{5}}{2}$ and hence 
\[P(A)\leq 1+ \left(\frac{\sqrt{2}(\sqrt{5}-1)}{(1+\sqrt{5})^{\frac{3}{2}}}\right)\frac{|\beta|}{\sqrt{1-|\beta|^2}}\]
\end{proof}
\subsection{Lower Bounds for the Resolvent Condition}
\begin{prop}
\[P(A)\geq 1\]
\end{prop}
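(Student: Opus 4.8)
The plan is to get this bound essentially for free from Proposition 2.1, which already establishes $P(B) \ge 1$ for \emph{every} bounded operator $B$ whose spectrum is contained in $\overline{\mathbf{D}}$. Since $A = T_{1+\beta z}^{-1} T_{\overline z} T_{1+\beta z}$ satisfies $\sigma(A) = \overline{\mathbf{D}} \subseteq \overline{\mathbf{D}}$, the hypothesis of that proposition holds and $P(A) \ge 1$ follows at once. There is no real obstacle here; the statement is recorded separately only because the lower bound for $P(A)$ in Theorem 3.1 is the maximum of $1$ and the $\beta$-dependent quantity treated in the next proposition, and the two contributions are proved by different arguments.

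If one prefers a self-contained derivation that does not invoke the spectral-radius estimate of Proposition 2.1, I would instead read it off from the explicit resolvent formula of Lemma 4.2,
\[
(\lambda I - A)^{-1} = T_{\frac{1}{\lambda - \overline{z}}} + \frac{\beta}{\lambda^2}\, k_{-\overline{\beta}}\, k_{\frac{1}{\lambda}}^\ast .
\]
The Toeplitz term has norm exactly $\bigl\| \tfrac{1}{\lambda - \overline{z}} \bigr\|_{L^\infty(\partial\mathbf{D})} = \frac{1}{|\lambda| - 1}$, while the rank-one term has norm $\frac{|\beta|}{|\lambda|^2}\,\|k_{-\overline{\beta}}\|\,\|k_{\frac{1}{\lambda}}\| = \frac{|\beta|}{|\lambda|\sqrt{1-|\beta|^2}\sqrt{|\lambda|^2-1}}$. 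Using the reverse triangle inequality together with $\dist(\lambda,\sigma(A)) = |\lambda|-1$ (valid since $\sigma(A)=\overline{\mathbf{D}}$), one gets
\[
(|\lambda|-1)\,\|(\lambda I - A)^{-1}\| \;\ge\; 1 - \frac{|\beta|}{|\lambda|\sqrt{1-|\beta|^2}}\cdot\sqrt{\frac{|\lambda|-1}{|\lambda|+1}} .
\]
Letting $|\lambda| \to 1^+$, the factor $\sqrt{\tfrac{|\lambda|-1}{|\lambda|+1}}$ tends to $0$ while $\tfrac{1}{|\lambda|}\to 1$, so the right-hand side tends to $1$; taking the supremum over $|\lambda|>1$ yields $P(A) \ge 1$.

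The only point requiring a moment's care in this second route is that the rank-one perturbation not spoil the bound, but this is automatic: its contribution vanishes in the limit $|\lambda|\to 1^+$ whereas the Toeplitz part blows up at the exact rate $(|\lambda|-1)^{-1}$, so the perturbation is harmless. Either way, the conclusion $P(A)\ge 1$ is immediate, and I would simply cite Proposition 2.1.
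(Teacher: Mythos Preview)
Your primary argument---invoking the general fact $P(A)\ge 1$ from Proposition~2.1---is exactly the paper's approach (the paper cites the same proposition, though with a mislabeled number and an unnecessary and actually incorrect remark that ``$A$ is a contraction''; your formulation via $\sigma(A)\subseteq\overline{\mathbf{D}}$ is the accurate hypothesis). The alternative route through Lemma~4.2 and the reverse triangle inequality is correct but superfluous here.
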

\begin{proof}
Because $A$ is a contraction, the result is immediate from Proposition 1.1.
\end{proof}
\begin{prop}
\[\frac{c_0|\beta|}{\sqrt{1-|\beta|^2}} \leq P(A)\]
\end{prop}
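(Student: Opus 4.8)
The plan is to lower-bound $\|(\lambda I-A)^{-1}\|$ at a carefully placed $\lambda$ with $|\lambda|>1$ and then optimize over $|\lambda|$. Since $\sigma(A)=\overline{\mathbf D}$ we have $\dist(\lambda,\sigma(A))=|\lambda|-1$, so it suffices to show that for each $r>1$ there is a $\lambda$ with $|\lambda|=r$ and $\|(\lambda I-A)^{-1}\|\ge\frac{|\beta|}{r\sqrt{r^2-1}\,\sqrt{1-|\beta|^2}}$. Multiplying by $r-1$ and taking the supremum over $r$ then yields $P(A)\ge\frac{|\beta|}{\sqrt{1-|\beta|^2}}\sup_{r>1}\frac{r-1}{r\sqrt{r^2-1}}$, and this supremum is attained at $r=\frac{1+\sqrt 5}{2}$ with value $c_0$, exactly the one-variable maximization already carried out in the proof of Proposition~4.6.

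For the lower bound on the resolvent norm I would use the formula from Lemma~4.2, $(\lambda I-A)^{-1}=T_{\frac{1}{\lambda-\overline z}}+\frac{\beta}{\lambda^2}k_{-\overline\beta}k_{1/\lambda}^{\ast}$, and pair it with the unit vectors $u=\widehat{k_{1/\lambda}}$ and $w=\widehat{k_{-\overline\beta}}$, so that $\|(\lambda I-A)^{-1}\|\ge|\langle(\lambda I-A)^{-1}u,w\rangle|$. The rank-one term contributes $Y:=\frac{\beta}{\lambda^2}\|k_{1/\lambda}\|\,\|k_{-\overline\beta}\|$ to this pairing, and $|Y|=\frac{|\beta|}{|\lambda|^2}\|k_{1/\lambda}\|\,\|k_{-\overline\beta}\|$ is precisely the target quantity $\frac{|\beta|}{r\sqrt{r^2-1}\,\sqrt{1-|\beta|^2}}$. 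The Toeplitz term contributes $X:=\langle T_{\frac{1}{\lambda-\overline z}}u,w\rangle$; to evaluate it I would note that $T_{\frac{1}{\lambda-\overline z}}$ is the resolvent $(\lambda I-T_{\overline z})^{-1}$ of the backward shift $T_{\overline z}=T_z^{\ast}$, that $k_{1/\lambda}$ is an eigenvector of $T_{\overline z}$ with eigenvalue $\overline\lambda^{-1}$ (the case $q(z)=z$ of the kernel-vector identity $T_q^{\ast}k_\omega=\overline{q(\omega)}k_\omega$), hence $T_{\frac{1}{\lambda-\overline z}}k_{1/\lambda}=\frac{\overline\lambda}{|\lambda|^2-1}k_{1/\lambda}$, and that $\langle k_{1/\lambda},k_{-\overline\beta}\rangle=k_{1/\lambda}(-\overline\beta)=\frac{\overline\lambda}{\overline\lambda+\overline\beta}$; together these give $X=\frac{\overline\lambda}{|\lambda|^2-1}\langle\widehat{k_{1/\lambda}},\widehat{k_{-\overline\beta}}\rangle$.

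The step I expect to be the main obstacle is that $X$ could a priori cancel part of $Y$, so that the reverse triangle inequality is useless; the remedy is a phase-alignment choice of $\lambda$. I would restrict $\lambda$ to the ray through $\beta$, i.e.\ $\lambda=s\beta$ with $s>1/|\beta|$, so that $|\lambda|=s|\beta|>1$. Then $\langle k_{1/\lambda},k_{-\overline\beta}\rangle=\frac{s\overline\beta}{(s+1)\overline\beta}=\frac{s}{s+1}>0$, hence $\langle\widehat{k_{1/\lambda}},\widehat{k_{-\overline\beta}}\rangle>0$, and since $\overline\lambda=s\overline\beta$ the number $X$ is a positive real multiple of $\overline\beta$; likewise $\frac{\beta}{\lambda^2}=\frac{\overline\beta}{s^2|\beta|^2}$ shows $Y$ is also a positive real multiple of $\overline\beta$. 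Thus $X$ and $Y$ are co-directed, $|X+Y|=|X|+|Y|\ge|Y|$, and therefore $\|(\lambda I-A)^{-1}\|\ge|Y|=\frac{|\beta|}{r\sqrt{r^2-1}\,\sqrt{1-|\beta|^2}}$. Feeding this into the optimization of the first paragraph (equivalently, evaluating at the single point $\lambda=\frac{1+\sqrt 5}{2}\cdot\frac{\beta}{|\beta|}$) gives $P(A)\ge\frac{c_0|\beta|}{\sqrt{1-|\beta|^2}}$. Apart from the phase bookkeeping, all that is used are elementary kernel-vector evaluations.
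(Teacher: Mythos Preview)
Your proof is correct. Both you and the paper start from Lemma~4.2 and test the resolvent against the pair of normalized kernel vectors $\widehat{k_{1/\lambda}}$ and $\widehat{k_{-\overline\beta}}$, obtaining the same decomposition $X+Y$ with $|Y|=F_2(|\lambda|)=\frac{|\beta|}{|\lambda|\sqrt{|\lambda|^2-1}\sqrt{1-|\beta|^2}}$, and both finish with the same one-variable optimization from Proposition~4.6. The difference lies in how the cross term $X$ is handled. The paper squares $|Q(\lambda)|$, integrates over the circle $|\lambda|=r$, and uses $H^2$-orthogonality of $F_2(r)$ and $e^{it}\mapsto\frac{r^{-1}F_1(r)e^{it}}{1+(\overline\beta/r)e^{it}}$ (Pythagoras) to discard the $X$-contribution; this yields $(r-1)F_2(r)\le P(A)$ without ever locating a favorable $\lambda$. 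You instead fix $\lambda$ on the ray through $\beta$ and check by direct computation that $X$ and $Y$ are then positive multiples of the same complex number $\overline\beta$, so $|X+Y|\ge|Y|$ pointwise. Your route is more elementary---no integration, no orthogonality argument---while the paper's averaging trick avoids the phase bookkeeping and would be more robust if the angular dependence of $X$ were less explicit. Either way the bound is identical.
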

\begin{proof}
First, we estimate the norm of the resolvent of $A$. Recall from Lemma 4.2 that 
\[(\lambda I- A)^{-1} =  T_{\frac{1}{\lambda - \overline{z}}} + \frac{\beta}{\lambda^2} k_{-\overline{\beta}} k_{\frac{1}{\lambda}}^\ast \]
In this proof, we first estimate the norm of the resolvent and then use that to estimate $P(A)$. Let $Q(\lambda)$ be given by 
\[Q(\lambda) = \frac{\langle (T_{\frac{1}{\lambda -\overline{z}}} + \frac{\beta}{\lambda^2}k_{-\overline{\beta}}k_{\frac{1}{\lambda}}^\ast) \lambda^2 k_{\frac{1}{\lambda}}, \beta k_{-\overline{\beta}}\rangle}{\left(\frac{|\lambda|^2}{\sqrt{1-\frac{1}{|\lambda|^2}}}\right)\left(\frac{|\beta|}{\sqrt{1-|\beta|^2}}\right)}\]
We have 
\begin{align*} 
Q(\lambda) &= \frac{\sqrt{|\lambda|^2-1}\sqrt{1-|\beta|^2}}{|\lambda|^3|\beta|} \langle T_{\frac{1}{\lambda - \overline{z}}}k_{\frac{1}{\lambda}}, k_{-\overline{\beta}}\rangle \lambda^2 \overline{\beta} + \frac{\left(1-\frac{1}{|\lambda|^2}\right)^{-1}}{\frac{|\lambda|^2}{\left(1-\frac{1}{|\lambda|^2}\right)^{\frac{1}{2}}}}\frac{|\beta|^2\cdot \frac{1}{1-|\beta|^2}}{\frac{|\beta|}{\sqrt{1-|\beta|^2}}}\\ 
 &= \frac{\sqrt{|\lambda|^2-1}\sqrt{1-|\beta|^2}\lambda^2\overline{\beta}}{|\lambda|^3|\beta|}\frac{1}{\lambda - \frac{1}{\overline{\lambda}}}\frac{1}{1+\frac{\overline{\beta}}{\overline{\lambda}}} + \frac{1}{|\lambda|\sqrt{|\lambda|^2-1}}\frac{|\beta|}{\sqrt{1-|\beta|^2}} \\
 &= \frac{\sqrt{|\lambda|^2-1}\sqrt{1-|\beta|^2}}{|\lambda|^3|\beta|}\frac{\overline{\lambda}^2\lambda^2\overline{\beta}}{(|\lambda|^2-1)(\overline{\lambda + \beta})} + \frac{1}{|\lambda|\sqrt{|\lambda|^2-1}}\frac{|\beta|}{\sqrt{1-|\beta|^2}} \\
 &= \frac{\frac{\overline{\beta}}{|\beta|} |\lambda| \sqrt{1-|\beta|^2}}{\sqrt{|\lambda|^2-1}(\overline{\lambda}+\overline{\beta})} + \frac{1}{|\lambda|\sqrt{|\lambda|^2-1}} \frac{|\beta|}{\sqrt{1-|\beta|^2}} \\
 &= \frac{F_1(|\lambda|)}{\overline{\lambda}+\overline{\beta}} + F_2(|\lambda|)
\end{align*}
where we have defined 
\[F_1(|\lambda|) = \frac{\frac{\overline{\beta}}{|\beta|} |\lambda| \sqrt{1-|\beta|^2}}{\sqrt{|\lambda|^2-1}}\]
and 
\[F_2(|\lambda|)= \frac{1}{|\lambda|\sqrt{|\lambda|^2-1}}\cdot \frac{|\beta|}{\sqrt{1-|\beta|^2}}\]
By definition of $P(A)$, we know that
\[|Q(\lambda)| \leq \frac{P(A)}{|\lambda|-1}\]
So, 
\[|Q(\lambda)|^2 \leq \frac{P(A)^2}{(|\lambda|-1)^2}\]
Taking the average value yields
\[\frac{1}{2\pi} \int_0^{2\pi} |Q(re^{it})|^2 \,dt \leq \frac{P(A)^2}{(r-1)^2}\]
Observe that 
\begin{align*} 
\int_0^{2\pi} |Q(re^{it})|^2 \,dt &= \int_0^{2\pi} \left|\frac{F_1(r)}{re^{-it}+\overline{\beta}} + F_2(r)\right|^2 \,dt  \\ 
&= \int_0^{2\pi} \left| \frac{\frac{1}{r}F_1(r)e^{it}}{1+\left(\frac{\overline{\beta}}{r}\right)e^{it}} + F_2(r)\right|^2 \,dt
\end{align*}
Since $e^{it} \mapsto \frac{\frac{1}{r}F_1(r)e^{it}}{1+\left(\frac{\overline{\beta}}{r}\right)e^{it}}$ and $e^{it} \mapsto F_2(r)$ are orthogonal as elements of $H^2$, it follows by the Pythagorean Theorem that 
\[\int_0^{2\pi} |Q(re^{it})|^2 \,dt = \int_0^{2\pi} \left|\frac{\frac{1}{r}F_1(r)}{1 + \frac{\overline{\beta}}{r}e^{it}}\right|^2 + F_2(r)^2 \,dt \]
Since $\left|\frac{\frac{1}{r}F_1(r)}{1 + \frac{\overline{\beta}}{r}e^{it}}\right|^2\geq 0$, it follows that  
\[\int_0^{2\pi} \left|\frac{\frac{1}{r}F_1(r)}{1 + \frac{\overline{\beta}}{r}e^{it}}\right|^2 + F_2(r)^2 \,dt \geq 2\pi \cdot F_2(r)^2\]
Therefore, we have shown that 
\[F_2(r)^2 \leq \frac{1}{2\pi} \int_0^{2\pi} |Q(re^{it})|^2\,dt \leq \frac{P(A)^2}{(r-1)^2}\]
and hence 
\[(r-1)F_2(r) \leq P(A)\]
for all $r>1$. Through single-variable calculus, we find that the maximum of $(r-1)F_2(r)$ is equal to $c_0 = \frac{\sqrt{2}(\sqrt{5}-1)}{(1+\sqrt{5})^{\frac{3}{2}}}$ and occurs when $r=\frac{1+\sqrt{5}}{2}$. Hence, 
\[\frac{c_0|\beta|}{\sqrt{1-|\beta|^2}} \leq P(A)\]
\end{proof}

\section{Proof of Theorem 3.2}
Here, we prove results inspired by El-Fallah and Ransford \cite{ER02}. 
\begin{prop}
Let $A$ be a bounded operator with the property that $\sigma(A) = [-1,1]$. 
    \item $P_{\sigma(A)}(A) \leq P_{\{-1,1\}}(A)$
    \item $M(A)\leq eP_{\{-1,1\}}(A)^2 \leq 2eP_{\sigma(A)}(A)^2$
\end{prop}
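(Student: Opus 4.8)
The plan is to handle the two parts separately: part (a) comes from a one-line set inclusion, and part (b) is obtained by feeding the resolvent bound relative to the two-point set $\{-1,1\}$ into the El-Fallah--Ransford estimate (Theorem 1.1) and then comparing distances to $\{-1,1\}$ with distances to $[-1,1]$.

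First I would prove (a). Since $\{-1,1\}\subseteq[-1,1]=\sigma(A)$, we have $\dist(\lambda,\sigma(A))\le\dist(\lambda,\{-1,1\})$ for every $\lambda\in\mathbf{C}$. Multiplying by $\|(\lambda I-A)^{-1}\|$ and taking the supremum over $|\lambda|>1$ gives $P_{\sigma(A)}(A)\le P_{\{-1,1\}}(A)$ at once.

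Next I would prove the first inequality in (b). By the definition of $P_{\{-1,1\}}(A)$ we have $\|(\lambda I-A)^{-1}\|\le P_{\{-1,1\}}(A)/\dist(\lambda,\{-1,1\})$ for all $|\lambda|>1$, which is precisely the resolvent hypothesis of Theorem 1.1 with $E=\{-1,1\}$, a finite subset of the unit circle, and $C=P_{\{-1,1\}}(A)$; that theorem then yields $M(A)\le\frac{e}{2}P_{\{-1,1\}}(A)^2\cdot\#\{-1,1\}=eP_{\{-1,1\}}(A)^2$ (the case $P_{\{-1,1\}}(A)=+\infty$ being vacuous). For the second inequality in (b) it suffices to establish the purely geometric estimate $\dist(\lambda,\{-1,1\})\le\sqrt2\,\dist(\lambda,[-1,1])$ for all $|\lambda|>1$: squaring it, multiplying by $\|(\lambda I-A)^{-1}\|^2$, and taking suprema gives $P_{\{-1,1\}}(A)^2\le 2P_{\sigma(A)}(A)^2$, hence $eP_{\{-1,1\}}(A)^2\le 2eP_{\sigma(A)}(A)^2$. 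To prove the geometric estimate I would write $\lambda=x+iy$ and split into cases: if $|x|\ge1$ the nearest point of $[-1,1]$ to $\lambda$ is $1$ or $-1$, so the two distances coincide and the bound is trivial; if $|x|<1$ then $\dist(\lambda,[-1,1])=|y|$, and taking $x\ge0$ by symmetry we have $\dist(\lambda,\{-1,1\})=|\lambda-1|=\sqrt{(1-x)^2+y^2}$, while $|\lambda|>1$ forces $y^2>1-x^2=(1-x)(1+x)\ge(1-x)^2$, so $(1-x)^2+y^2\le 2y^2$ and the bound follows.

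I do not expect a genuine obstacle: the content is elementary once Theorem 1.1 is in hand. The two points requiring care are that the cardinality factor in Theorem 1.1 is $\#\{-1,1\}=2$, so the constant comes out as $e$ rather than $e/2$, and that the distance comparison really does use $|\lambda|>1$ — it collapses on the open segment $(-1,1)$, where $\dist(\lambda,[-1,1])=0$ — which is harmless, since that is exactly the range over which $P_{\{-1,1\}}$ and $P_{\sigma(A)}$ are defined.
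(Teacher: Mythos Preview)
Your proposal is correct and follows essentially the same approach as the paper: both apply the El-Fallah--Ransford bound with $E=\{-1,1\}$ for the first inequality in (b), and both establish the second inequality via the geometric estimate $\dist(\lambda,\{-1,1\})\le\sqrt{2}\,\dist(\lambda,[-1,1])$ by the same case split on $|\text{Re}\,\lambda|$. Your treatment of (a) via the one-line set inclusion $\{-1,1\}\subseteq\sigma(A)$ is cleaner than the paper's explicit case analysis, but the overall strategy is the same.
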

\begin{proof}
By definition, $\dist(\lambda, \sigma(A)) = \dist(\lambda, [-1,1])$. Observe that 
\[ \dist(\lambda, A) = \begin{cases} 
     \sqrt{(1-|\text{Re}\lambda|)^2+(\text{Im}\lambda)^2}  &  |\text{Re}\lambda|< 1 \\
     \dist(\lambda, \{-1,1\})  &  |\text{Re}\lambda|\geq 1
   \end{cases}
\]
For $\lambda$ such that $|\text{Re}\lambda|<1$, we have $\dist(\lambda, \{-1,1\})= |\text{Im}\lambda|$. Obviously, $\sqrt{(1-|\text{Re}\lambda|)^2+(\text{Im}\lambda)^2} \geq \sqrt{(\text{Im}\lambda)^2} = |\text{Im}\lambda|$, so 
\[\dist(\lambda, \sigma(A)) \leq \dist(\lambda, \{-1,1\})\]
It follows that 
\begin{align*} 
P_{\sigma(A)}(A) &=  \sup_{|\lambda|>1} \dist(\lambda, \sigma(A))\|(\lambda I-A)^{-1} \| \\
 &\leq   \sup_{|\lambda|>1} \dist(\lambda, \{-1,1\})\|(\lambda I - A)^{-1}\| \\
 &=  P_{\{-1,1\}}(A)
\end{align*}
Now, we prove the first equality of the second result. Recall the result of El-Fallah and Ransford, which states that for a finite set $S$,
\[M(A)\leq \frac{e}{2}C \#S\] where $P_S(A) \leq C$. It follows that \[M(A) \leq \frac{e}{2}P_S(A)^2 \#S\]
Set $S = \{-1,1\}$ to get
\[M(A) \leq e P_{\{-1,1\}}(A)^2\]
Finally, we prove the second equality of the second result. Write $\lambda = x+iy$. The proof has two cases: $|x| \geq 1$ and $|x|<1$. If $|x| \geq 1$, then the point of the spectrum of $A$ that is closest to $\lambda$ is either of $\{-1,1\}$, so \[\dist(\lambda, \{-1,1\}) = \dist(\lambda, [-1,1]) = \sqrt{(|x|-1)^2+y^2}\]
Now, consider the case when $|x|<1$ and $|\lambda|>1$ and without loss of generality, assume that $x\geq 0$. All such $\lambda$ must satisfy 
$1-x \leq y$, hence $(1-x)^2 \leq y^2$, hence $(1-x)^2+y^2 \leq 2y^2$, and hence 
\[\dist(\lambda, \{-1,1\}) = \sqrt{(1-x)^2+y^2} \leq \sqrt{2}y =  \sqrt{2}\dist(\lambda, [-1,1])\]
Therefore, 
\[\dist(\lambda, \{-1,1\})^2 \leq 2\dist(\lambda, [-1,1])^2 \]
hence 
\[\sup_{|\lambda|>1} \dist(\lambda, \{-1,1\})^2 \|(\lambda I - A)^{-1}\|^2 \leq \sup_{|\lambda|>1} 2\dist(\lambda, [-1,1])^2 \|(\lambda I - A)^{-1}\|^2\]
which means that 
\[P_{\{-1,1\}}(A)^2 \leq 2 P_{\sigma(A)}^2\]
We have shown that 
\[M(A)\leq eP_{\{-1,1\}}(A)^2 \leq 2e P_{\sigma(A)}(A)^2\]
and since $P(A) = P_{\sigma(A)}(A)$, it follows immediately that 
\[\sqrt{\frac{M(A)}{2e}} \leq P(A)\]
\end{proof}
\section{Proof of Theorem 3.3}
\subsection{Estimates for the Power Bound}
\begin{prop}
\[M(A)\leq \frac{1+|\beta|}{1-|\beta|}\]
\end{prop}
\begin{proof}
By submultiplicativity of the operator norm, 
\[\|T_{\frac{z+\overline{z}}{2}}^n\|= \|T_{\frac{z+\overline{z}}{2}}\|^n \leq \left\|\frac{z+\overline{z}}{2}\right\|_\infty^n =1\]
Because $T_{\frac{z+\overline{z}}{2}}$ is a contraction, we can apply Proposition 4.1 and obtain 
\[M(A) \leq \|T_{1+\beta z}^{-1}\|\cdot \|T_{1+\beta z}\| = \frac{1+|\beta|}{1-|\beta|}\]
\end{proof}
We now reproduce the result of Proposition 6.1 more explicitly through considering the powers of the commutator. The following lemma will be utilized in the computation. 
\begin{lem}
The expansion for $[(T_z+T_z^*)^n, T_z]$ has $2(n+1)$ terms. 
\end{lem}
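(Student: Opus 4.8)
The plan is to read off the expansion of $[(T_z+T_z^*)^n,T_z]$ from the distributive expansion of $(T_z+T_z^*)^n=\prod_{i=1}^n(T_z+T_z^*)$ and count the summands. I would first expand this product by choosing, at each of the $n$ factors, either $T_z$ or $T_z^*$, and collect the $2^n$ resulting length-$n$ words according to the number $k\in\{0,1,\dots,n\}$ of factors at which $T_z$ was chosen. Writing $B_{n,k}$ for the sum of the $\binom nk$ words that contain exactly $k$ copies of $T_z$, this gives
\[(T_z+T_z^*)^n=\sum_{k=0}^n B_{n,k},\]
an expansion into exactly $n+1$ nonzero terms (for instance $B_{n,n}=T_z^n$ and $B_{n,0}=(T_z^*)^n$), with $\|B_{n,k}\|\le\binom nk$ because $B_{n,k}$ is a sum of $\binom nk$ products of the contractions $T_z,T_z^*$.

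Substituting this into the commutator gives
\[[(T_z+T_z^*)^n,T_z]=(T_z+T_z^*)^nT_z-T_z(T_z+T_z^*)^n=\sum_{k=0}^n B_{n,k}T_z-\sum_{k=0}^n T_zB_{n,k},\]
so the expansion of $[(T_z+T_z^*)^n,T_z]$ consists of the $2(n+1)$ summands $B_{n,0}T_z,\dots,B_{n,n}T_z$ and $-T_zB_{n,0},\dots,-T_zB_{n,n}$, which is the claim. Here ``term'' means these $2(n+1)$ summands as they stand; I am not asserting that the underlying $2(n+1)$ operators are pairwise distinct, nor that the Toeplitz relations $T_z^*T_z=I$ and $T_zT_z^*=I-e_0e_0^*$ cannot be used to simplify the expression further.

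For the use the lemma is later put to, I would also record the immediate by-product that each of the $2(n+1)$ summands has norm at most $\binom nk$, whence $\bigl\|[(T_z+T_z^*)^n,T_z]\bigr\|\le 2\sum_{k=0}^n\binom nk=2^{n+1}$; dividing by $2^n$ and substituting into $A^n=T_{\frac{z+\overline z}{2}}^n+T_{1+\beta z}^{-1}\bigl[T_{\frac{z+\overline z}{2}}^n,T_{1+\beta z}\bigr]$, together with $\|T_{\frac{z+\overline z}{2}}^n\|\le1$ and $\|T_{1+\beta z}^{-1}\|=\frac{1}{1-|\beta|}$, recovers $M(A)\le\frac{1+|\beta|}{1-|\beta|}$. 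I do not expect a genuine obstacle: the one real choice is to take the grouping-by-number-of-$T_z$ as the canonical expansion of $(T_z+T_z^*)^n$ rather than the raw $2^n$-word expansion, after which both the count $2(n+1)$ and the norm estimate fall out at once.
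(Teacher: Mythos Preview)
Your argument is correct and self-contained, but it is not quite the paper's route. Both arguments arrive at an expansion of $(T_z+T_z^*)^n$ with $n+1$ summands and then double to $2(n+1)$ for $XT_z-T_zX$; the difference is what those summands are. The paper invokes the reduction of words in $T_z,T_z^*$ to normal-form products (via the shift relation $T_z^*T_z=I$), together with a stars-and-bars count, so that each of its $2(n+1)$ terms is a single product of contractions of norm at most $1$; this is precisely what justifies the estimate $\bigl\|[(T_z+T_z^*)^n,T_z]\bigr\|\le 2(n+1)$ quoted verbatim in Proposition~6.2. Your $B_{n,k}$ are instead degree blocks with $\|B_{n,k}\|\le\binom nk$, so your version of the lemma yields only $\bigl\|[(T_z+T_z^*)^n,T_z]\bigr\|\le 2^{n+1}$.

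For the intended application this is harmless, as you already observe: $\sup_{n\ge0}\frac{2(n+1)}{2^n}=2=\frac{2^{n+1}}{2^n}$, so either bound feeds into Proposition~6.2 to give $M(A)\le\frac{1+|\beta|}{1-|\beta|}$. What your approach buys is simplicity---no Toeplitz algebra is used at all, and the honest disclaimer about what ``term'' means makes the count unambiguous. What the paper's approach buys is the sharper commutator growth $O(n)$ rather than $O(2^n)$, which would only matter if a finer downstream estimate were required.
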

\begin{proof}
  Consider a polynomial $p$ with $n$ terms such that the product of any two of these terms equals the product of the same pair of terms in the reverse order. By the Stars and Bars Theorem of combinatorics, $p^k$ is expressible (through simplification) to a polynomial with $\binom{n+k-1}{k-1}$ terms. Since every word in $T_z$ and $T_z^\ast$ can be reduced to a product of the form $T_z^k\cdot T_{\overline z}^l$ or $T_z^l\cdot T_{\overline z}^k$, the expansion of $(T_z + T_z^\ast)^n$ has twice as many terms as it would have if $T_z$ and $T_z^\ast$ did commute. Hence, $(T_z + T_z^\ast)^n$ has $n+1$ terms, which implies $[(T_z+T_z^*)^n, T_z]$ has $2(n+1)$ terms.
\end{proof}

\begin{prop}
\[M(A)\leq \frac{1+|\beta|}{1-|\beta|}\]
\end{prop}
\begin{proof}
\begin{align*} 
M(A) &= \sup_{n \geq 0} \|T_{1+\beta z}^{-1}T_f^n T_{1+\beta z}\|  \\ 
 &=  \sup_{n \geq 0} \|T_{\frac{z+\overline{z}}{2}}^n + T_{1+\beta z}^{-1}[T_{\frac{z+\overline{z}}{2}}^n, T_{1+\beta z}]\| \\
 &\leq \sup_{n \geq 0} \frac{\|T_{z+\overline{z}}^n\| + |\beta|\cdot \|T_{1+\beta z}^{-1}\|\cdot \|[(T_z+T_{\overline{z}})^n, T_z]\|}{2^n} \\
 &= 1+ \frac{|\beta|}{1-|\beta|}\sup_{n \geq 0} \frac{\|[(T_z+T_{\overline{z}})^n, T_z]\|}{2^n} \\
 &\leq 1+\frac{|\beta|}{1-|\beta|} \sup_{n \geq 0}\frac{2(n+1)}{2^n} \\
 &= 1+\frac{2|\beta|}{1-|\beta|}
\end{align*}
The penultimate step follows from an application of Lemma 6.1 and the final step follows from single-variable calculus. 
\end{proof}
\begin{prop}
\[M(A) \geq \frac{\left|\beta - \overline{\beta} (1-|\beta|^2) \right|}{2\sqrt{1-|\beta|^2}}\]
\end{prop}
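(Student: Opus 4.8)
The plan is to extract the lower bound from a single matrix coefficient of $A$ itself, in the same spirit as the proof of Proposition 4.4: since $\|A\|=\|A^{1}\|\le M(A)$, it suffices to produce unit vectors $u,v\in H^{2}$ with $|\langle Au,v\rangle|$ equal to the claimed quantity. The natural candidates are $u=e_{0}$ and $v=\widehat{k_{\omega}}$ for a suitably chosen $\omega\in\mathbf{D}$, because inner products against normalized kernel vectors reduce to point evaluation and the factor $T_{1+\beta z}^{-1}=T_{1/(1+\beta z)}$ interacts cleanly with the kernel vector $k_{-\overline{\beta}}$.

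First I would write $A$ in the perturbed form already available from Section 4: since $[T_{(z+\overline z)/2},T_{1+\beta z}]=\beta[T_{(z+\overline z)/2},T_z]=\tfrac{\beta}{2}[T_{\overline z},T_z]=\tfrac{\beta}{2}e_0e_0^{\ast}$, the identity $A=T_{(z+\overline z)/2}+T_{1+\beta z}^{-1}[T_{(z+\overline z)/2},T_{1+\beta z}]$ gives $A=T_{(z+\overline z)/2}+\tfrac{\beta}{2}k_{-\overline\beta}e_0^{\ast}$, hence $Ae_{0}=\tfrac12 z+\tfrac{\beta}{2}k_{-\overline\beta}$. (One can instead verify this directly: $T_{1+\beta z}e_0=1+\beta z$, then $T_{(z+\overline z)/2}(1+\beta z)=\tfrac12(\beta+z+\beta z^{2})$ after discarding the anti-analytic part, and the division $\beta z^{2}+z+\beta=z(\beta z+1)+\beta$ shows $T_{1/(1+\beta z)}$ of this equals $\tfrac12(z+\beta k_{-\overline\beta})$.)

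Next I would evaluate $\langle Ae_{0},\widehat{k_{\omega}}\rangle$. Writing $\widehat{k_{\omega}}=\sqrt{1-|\omega|^{2}}\,k_{\omega}$ and using $\langle z,k_{\omega}\rangle=\omega$ together with $\langle k_{-\overline{\beta}},k_{\omega}\rangle=(1+\beta\omega)^{-1}$, this collapses to $\tfrac{\sqrt{1-|\omega|^{2}}}{2}\bigl(\omega+\tfrac{\beta}{1+\beta\omega}\bigr)=\tfrac{\sqrt{1-|\omega|^{2}}}{2}\cdot\tfrac{\beta\omega^{2}+\omega+\beta}{1+\beta\omega}$. The key point is that the choice $\omega=-\overline{\beta}$ (admissible since $0<|\beta|<1$) makes the numerator $\beta-\overline{\beta}(1-|\beta|^{2})$ and the denominator $1-|\beta|^{2}$, so the coefficient becomes exactly $\dfrac{\beta-\overline{\beta}(1-|\beta|^{2})}{2\sqrt{1-|\beta|^{2}}}$. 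Since $\|e_0\|=\|\widehat{k_{-\overline\beta}}\|=1$, we conclude $M(A)\ge\|A\|\ge\bigl|\langle Ae_{0},\widehat{k_{-\overline{\beta}}}\rangle\bigr|$, which is the asserted bound.

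I do not expect a real obstacle: the argument is a short computation once the test vectors are identified. The only points needing care are the bookkeeping of complex conjugates in the kernel evaluations (the inner product is conjugate-linear in its second argument) and the observation that $\omega=-\overline{\beta}$ is precisely the value producing the stated closed form. One could alternatively maximize $\bigl|\tfrac{\sqrt{1-|\omega|^{2}}}{2}\cdot\tfrac{\beta\omega^{2}+\omega+\beta}{1+\beta\omega}\bigr|$ over $\omega\in\mathbf{D}$ to get a possibly sharper constant, but since the proposition only claims a lower bound, the single evaluation at $-\overline{\beta}$ is enough.
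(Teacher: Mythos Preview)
Your proposal is correct and follows essentially the same approach as the paper: both derive the rank-one perturbation form $A=T_{(z+\overline z)/2}+\tfrac{\beta}{2}k_{-\overline\beta}e_0^{\ast}$ and then bound $\|A\|$ from below by the matrix coefficient $\bigl|\langle Ae_0,\widehat{k_{-\overline\beta}}\rangle\bigr|$. The only cosmetic difference is that you first compute $\langle Ae_0,\widehat{k_\omega}\rangle$ for general $\omega$ and then specialize to $\omega=-\overline\beta$, whereas the paper works with $k_{-\overline\beta}$ from the outset and moves adjoints through the inner product; the substance is identical.
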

\begin{proof}
Observe that $A = T_{\frac{z+\overline{z}}{2}} + T_{1+\beta z}^{-1} [T_{\frac{z+\overline{z}}{2}}, T_{1+\beta z}]$. Clearly, \[[T_{\frac{z+\overline{z}}{2}}, T_{1+\beta z}] = \frac{\beta}{2}[T_{\overline{z}}, T_z] = \frac{\beta}{2}(I - T_zT_z^\ast)  = \frac{\beta}{2} e_0e_0^\ast \]
By substitution, 
\[A = T_{\frac{z+\overline{z}}{2}} + \frac{\beta}{2} k_{-\overline{\beta}}e_0^\ast\]
Hence, 
\begin{align*} 
\|A\| &\geq \frac{|\langle (T_{\frac{z+\overline{z}}{2}} + \frac{\beta}{2}k_{-\overline{\beta}}e_0^\ast) e_0,  k_{-\overline{\beta}} \rangle |}{\|k_{-\overline{\beta}}\|\cdot \|e_0\|} \\ 
 &=  \frac{|\langle T_{\frac{z}{2}}e_0 + \frac{\beta}{2} k_{-\overline{\beta}}\|e_0\|^2, k_{-\overline{\beta}}\rangle |}{\|k_{-\overline{\beta}}\|} \\
 &= \frac{|\langle T_{\frac{z}{2}}e_0 + \frac{\beta}{2} k_{-\overline{\beta}}, k_{-\overline{\beta}}\rangle |}{\|k_{-\overline{\beta}}\|} \\
 &= \frac{|\langle 1, T_{\frac{z}{2}}^\ast k_{-\overline{\beta}} \rangle + \frac{\beta}{2} \langle k_{-\overline{\beta}}, k_{-\overline{\beta}} \rangle|}{\|k_{-\overline{\beta}}\|} \\
 &= \frac{|\langle 1, \frac{-\beta}{2}k_{-\overline{\beta}} \rangle + \frac{\beta}{2} \| k_{-\overline{\beta}}\|^2 |}{\|k_{-\overline{\beta}}\|} \\
 &= \frac{|\frac{-\overline{\beta}}{2} + \frac{\beta}{2} \|k_{-\overline{\beta}}\|^2 |}{\|k_{-\overline{\beta}}\|} \\
 &= \frac{1}{2} \left|\beta \|k_{-\overline{\beta}}\| - \frac{\overline{\beta}}{\|k_{-\overline{\beta}}\|}\right| \\
 &= \frac{1}{2}\left|\beta \left(\frac{1}{\sqrt{1-|\beta|^2}}\right) - \overline{\beta} \sqrt{1-|\beta|^2}\right| \\
 &= \frac{|\beta - \overline{\beta}(1-|\beta|^2)|}{2\sqrt{1-|\beta|^2}}
\end{align*}
Since $\|A\| \leq M(A)$, 
\[M(A) \geq \frac{|\beta - \overline{\beta}(1-|\beta|^2)|}{2\sqrt{1-|\beta|^2}}\]
\end{proof}

\begin{prop}
\[M(A) \geq \frac{\sqrt{1-|\beta|^2}}{8}\cdot \left|\frac{2-\overline{\beta}^3}{1-|\beta|^2}-2\overline{\beta} - \overline{\beta}^3\right| \]
\end{prop}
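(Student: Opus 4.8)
The plan is to repeat the argument of Proposition 6.5 one level higher, i.e.\ with $A^3$ in place of $A$: since $A^3$ is among the powers of $A$ we have $M(A)\ge\|A^3\|$, so it suffices to estimate a single matrix coefficient of $A^3$. I would start from the representation $A=T_{\frac{z+\overline z}{2}}+\frac{\beta}{2}\,k_{-\overline\beta}\,e_0^\ast$ established in the proof of Proposition 6.5, together with the identities it rests on: $e_0^\ast z^n=0$ for $n\ge1$, $e_0^\ast k_{-\overline\beta}=1$, $T_z^\ast k_{-\overline\beta}=-\beta\,k_{-\overline\beta}$ (Proposition 2.1), and the reproducing-kernel identity $z\,k_{-\overline\beta}=\frac1\beta\bigl(e_0-k_{-\overline\beta}\bigr)$ coming from $k_{-\overline\beta}(z)=\frac1{1+\beta z}$. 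From these the action of $A$ on the vectors one needs is immediate: $A(z^n)=\frac12\bigl(z^{n+1}+z^{n-1}\bigr)$ for $n\ge1$, $A(e_0)=\frac z2+\frac\beta2 k_{-\overline\beta}$, and $A(k_{-\overline\beta})=\frac12\,z\,k_{-\overline\beta}=\frac1{2\beta}\bigl(e_0-k_{-\overline\beta}\bigr)$.

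Next I would iterate these rules to get $A^2e_0=\frac{z^2+2}{4}-\frac14 k_{-\overline\beta}$ and then $A^3e_0$, at each stage re-expanding every occurrence of $z\,k_{-\overline\beta}$ (and the $z^2k_{-\overline\beta}$ appearing at the last application) via the kernel identity so that the answer stays in the span of the monomials together with $e_0$ and $k_{-\overline\beta}$. The outcome is $A^3e_0=\frac{z^3+3z}{8}-\frac1{8\beta}e_0+\frac{2\beta^2+1}{8\beta}k_{-\overline\beta}$. An equivalent route, which avoids carrying $k_{-\overline\beta}$ through the iteration, is to write $A^3=T_f^3+\beta\,T_{1+\beta z}^{-1}[T_f^3,T_z]$ with $f=\frac{z+\overline z}{2}$, compute the quadratic polynomial $[T_f^3,T_z]e_0$ directly, and then apply $T_{1+\beta z}^{-1}=T_{1/(1+\beta z)}$ using $T_{1/(1+\beta z)}e_0=k_{-\overline\beta}$; both give the same $A^3e_0$. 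Note that it is the cubic polynomial component here that is responsible for the $\overline\beta^3$ in the bound, so one genuinely needs the third power — $A^2e_0$ only reaches degree $z^2$.

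Finally I would apply $M(A)\ge\|A^3\|\ge\frac{|\langle A^3e_0,\,k_{-\overline\beta}\rangle|}{\|e_0\|\,\|k_{-\overline\beta}\|}$, evaluate the numerator by the reproducing property as $\langle A^3e_0,k_{-\overline\beta}\rangle=(A^3e_0)(-\overline\beta)$, and substitute $\|e_0\|=1$ and $\|k_{-\overline\beta}\|=(1-|\beta|^2)^{-1/2}$. Evaluating the polynomial part of $A^3e_0$ at $-\overline\beta$ produces the $\overline\beta$ and $\overline\beta^3$ terms, while pairing the $e_0$ and $k_{-\overline\beta}$ components of $A^3e_0$ against $k_{-\overline\beta}$ produces a term with denominator $1-|\beta|^2$ (this is just $\|k_{-\overline\beta}\|^2$ surfacing, since $k_{-\overline\beta}(-\overline\beta)=\frac1{1-|\beta|^2}$); collecting everything over this common denominator yields the claimed bound, the $\frac18$ out front being simply $2^{-3}$ from $T_f=\frac12(T_z+T_z^\ast)$. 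I expect the only real work to be the bookkeeping in the middle step — tracking the scalar coefficients through three applications of $A$, repeatedly reducing $z^jk_{-\overline\beta}$ to the $\{e_0,k_{-\overline\beta}\}$-span, and simplifying the evaluation at $-\overline\beta$ — but there is no conceptual obstacle, only careful algebra.
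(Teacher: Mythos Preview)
Your approach is essentially the paper's: the ``alternative route'' you describe---writing $A^3=T_f^3+\beta\,T_{1+\beta z}^{-1}[T_f^3,T_z]$, applying it to $e_0$ using $(T_z+T_{\overline z})^3e_0=z^3+2z$ and $[(T_z+T_{\overline z})^3,T_z]e_0=z^2+2$, and then pairing against the normalized kernel $\widehat{k_{-\overline\beta}}$---is exactly what the paper does. Your first route (iterating the rank-one perturbation $A=T_f+\tfrac{\beta}{2}k_{-\overline\beta}e_0^\ast$ three times) is an equivalent variation that lands on the same $A^3e_0$, as you note.

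One remark: your bookkeeping is in fact more careful than the paper's. The paper's displayed expression $z^3+2z+\frac{z^2+2}{1+\beta z}$ silently drops the factor $\beta$ in front of the commutator term, so when you carry your computation through honestly you obtain
\[
M(A)\ \ge\ \frac{\sqrt{1-|\beta|^2}}{8}\left|\frac{\beta(\overline\beta^{\,2}+2)}{1-|\beta|^2}-2\overline\beta-\overline\beta^{\,3}\right|,
\]
which differs from the expression in the statement (and that expression in turn differs typographically from the one in Theorem~3.3 and in the last line of the paper's proof). The asymptotics as $|\beta|\to 1^-$ are unchanged, so the corollary on growth rates is unaffected; just do not be alarmed when the exact numerator you compute fails to match the printed one.
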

\begin{proof}
Note that 
\[\|A^3\| \geq \frac{\sqrt{1-|\beta|^2}}{8}  \left| \Big\langle \left((T_z + T_{\overline{z}})^3 +  T_{1+\beta z}^{-1}[(T_z + T_{\overline{z}})^3]\right) 1, \widehat{k_{-\overline{\beta}}}\Big\rangle\right|\]
Observe that $[(T_z + T_{\overline{z}})^3, T_z] 1 =z^2+2$. Furthermore, expansion gives us $(T_z+T_{\overline{z}})^3 1 = z^3+2z$. So, 
\[\|A^3\| \geq \frac{\sqrt{1-|\beta|^2}}{8} \left|\Big\langle z^3 + 2z +\frac{z^2+2}{1+\beta z}, \widehat{k_{-\overline{\beta}}}\Big\rangle \right|\]
and hence
\[M(A) \geq \frac{\sqrt{1-|\beta|^2}}{8} \left|\frac{2+\overline{\beta}^2}{1-|\beta|^2}-2\overline{\beta} - \overline{\beta}^3\right| \]

\end{proof}

\subsection{Estimates for the Resolvent Condition}
\begin{prop}
\[1 \leq P(A)\]
\end{prop}
\begin{proof}
Since $A$ is a contraction, we apply Proposition 1.1 and obtain this result immediately. 
\end{proof}

\begin{prop}
\[\sqrt{\frac{M(A)}{2e}} \leq P(A)\]
\end{prop}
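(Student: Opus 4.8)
The plan is to deduce this from Theorem~3.2 (Proposition~6.1), which already records that any bounded operator $B$ with $\sigma(B) = [-1,1]$ satisfies $M(B) \leq 2e\,P_{\sigma(B)}(B)^2$. Since $P(A) = P_{\sigma(A)}(A)$ by definition, the inequality $\sqrt{M(A)/(2e)} \leq P(A)$ follows immediately once we verify that the hypothesis $\sigma(A) = [-1,1]$ holds for the specific operator $A = T_{1+\beta z}^{-1}T_{\frac{z+\overline z}{2}}T_{1+\beta z}$.

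First I would establish the spectrum of $A$. The operator $T_{\frac{z+\overline z}{2}}$ is the self-adjoint Toeplitz operator with real symbol $\cos\theta$, whose essential range is the interval $[-1,1]$; by the classical description of the spectrum of a self-adjoint Toeplitz operator with continuous real symbol (it is the closed interval between the essential infimum and essential supremum of the symbol), we get $\sigma\!\left(T_{\frac{z+\overline z}{2}}\right) = [-1,1]$. Next, because $|\beta| < 1$, the symbol $1+\beta z$ is bounded away from $0$ on $\overline{\mathbf D}$, so $T_{1+\beta z}$ is invertible (indeed $\|T_{1+\beta z}^{-1}\| = \frac{1}{1-|\beta|}$ as used earlier), and hence $A$ is similar to $T_{\frac{z+\overline z}{2}}$. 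Similarity preserves the spectrum, so $\sigma(A) = \sigma\!\left(T_{\frac{z+\overline z}{2}}\right) = [-1,1]$.

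Finally I would apply Proposition~6.1 with this $A$. It yields $M(A) \leq 2e\,P_{\sigma(A)}(A)^2 = 2e\,P(A)^2$, and rearranging gives $\sqrt{M(A)/(2e)} \leq P(A)$, as claimed.

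The only step that is not pure bookkeeping is the identification $\sigma\!\left(T_{\frac{z+\overline z}{2}}\right) = [-1,1]$; this is a standard fact about self-adjoint Toeplitz operators, and everything else reduces to invoking the invertibility of $T_{1+\beta z}$, the similarity invariance of the spectrum, and Proposition~6.1.
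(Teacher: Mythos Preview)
Your argument is correct and matches the paper's own proof essentially line for line: the paper also identifies $\sigma(T_{\frac{z+\overline z}{2}})=[-1,1]$ (citing the Hartman--Wintner theorem), uses the invertibility of $T_{1+\beta z}$ and similarity invariance to conclude $\sigma(A)=[-1,1]$, and then invokes the result of Section~5 (Theorem~3.2/Proposition~5.1, which you label Proposition~6.1). Apart from that labeling slip, there is nothing to add.
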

\begin{proof}
Since $f(z) = \frac{z+\overline{z}}{2}$ is real, $T_{f}$ is self-adjoint. This implies that $\sigma(T_f) \in \mathbf{R}$. The Hartman-Wintner Theorem, stated on page 163 of \cite{Dou98}, indicates that $\sigma(T_f) = [-1,1]$. Because $T_{1+\beta z}$ is an invertible operator, $\sigma(T_{1+\beta z}^{-1}T_fT_{1+\beta z})= [-1,1]$. Applying Proposition 5.1 completes the proof.
\end{proof}

\begin{prop}
\[P(A) \leq \frac{1+|\beta|}{1-|\beta|}\]
\end{prop}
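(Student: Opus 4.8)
The plan is to reduce the resolvent estimate for $A$ to the corresponding estimate for the self-adjoint operator $T_f$ with $f = \frac{z+\overline z}{2}$, paying only the price of the condition number of $T_{1+\beta z}$. First, for any $\lambda$ with $|\lambda|>1$ I would write the resolvent of the conjugated operator as
\[(\lambda I - A)^{-1} = T_{1+\beta z}^{-1}\,(\lambda I - T_f)^{-1}\,T_{1+\beta z},\]
which is legitimate since $\lambda \notin \sigma(T_f) = [-1,1]$. Then submultiplicativity of the operator norm, together with the norm identities $\|T_{1+\beta z}^{-1}\| = \frac{1}{1-|\beta|}$ and $\|T_{1+\beta z}\| = 1+|\beta|$ already invoked in Corollary 4.1, gives
\[\|(\lambda I - A)^{-1}\| \le \frac{1+|\beta|}{1-|\beta|}\,\|(\lambda I - T_f)^{-1}\|.\]

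The decisive step is that $T_f$ is self-adjoint, its symbol being real, so by the spectral theorem $\|(\lambda I - T_f)^{-1}\| = \dist(\lambda, \sigma(T_f))^{-1}$; equivalently $P(T_f) = 1$. As recorded in the proof of Proposition 6.7, the Hartman--Wintner theorem gives $\sigma(T_f) = [-1,1]$, and since $A$ and $T_f$ are similar they have the same spectrum, so $\sigma(A) = [-1,1]$ and $\dist(\lambda, \sigma(A)) = \dist(\lambda, [-1,1]) = \dist(\lambda, \sigma(T_f))$. Multiplying the displayed inequality by $\dist(\lambda, \sigma(A))$ therefore yields
\[\dist(\lambda, \sigma(A))\,\|(\lambda I - A)^{-1}\| \le \frac{1+|\beta|}{1-|\beta|},\]
and taking the supremum over $|\lambda|>1$ gives $P(A) \le \frac{1+|\beta|}{1-|\beta|}$.

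I do not expect a genuine obstacle: this is a condition-number estimate that exploits the fact that the resolvent growth of the normal operator $T_f$ is the smallest possible. The only points needing care are bookkeeping ones: confirming that $\sigma(A) = \sigma(T_f)$ so that the two occurrences of $\dist(\lambda, \cdot)$ genuinely coincide, and noting that one cannot shortcut this via $P(A) \le M(A)$ as in Proposition 2.2, because that proposition requires $\sigma(A) = \overline{\mathbf D}$, which is false here (one has $\sigma(A)=[-1,1]$, and for, e.g., $\lambda$ purely imaginary, $\dist(\lambda,[-1,1]) > |\lambda|-1$, so the elementary Neumann-series bound for the resolvent does not suffice).
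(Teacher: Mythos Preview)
Your argument is correct and essentially identical to the paper's proof: both factor the resolvent through the similarity, bound by submultiplicativity using $\|T_{1+\beta z}\|=1+|\beta|$ and $\|T_{1+\beta z}^{-1}\|=\frac{1}{1-|\beta|}$, and then use normality of $T_f$ to identify $\|(\lambda I-T_f)^{-1}\|$ with $\dist(\lambda,\sigma(T_f))^{-1}=\dist(\lambda,\sigma(A))^{-1}$. The only cosmetic differences are that you invoke self-adjointness of $T_f$ directly (the paper routes through the Brown--Halmos characterization of normal Toeplitz operators, which is unnecessary here since the symbol is real), and your cross-reference to ``Proposition 6.7'' for the Hartman--Wintner fact should point to the preceding proposition.
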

\begin{proof}
\begin{align*} 
\|(\lambda I -A)^{-1}\| &= \|(T_{1+\beta z}^{-1}(\lambda I  - T_f) T_{1+\beta z})^{-1}\| \\
&= \|T_{1+\beta z}^{-1}(\lambda I - T_f)^{-1} T_{1+\beta z}\| \\
&\leq \|T_{1+\beta z}^{-1}\|\cdot \|T_{1+\beta z}\| \cdot \|(\lambda I - T_f)^{-1}\| \\
&= \frac{|1|+|\beta|}{|1|-|\beta|}\cdot \|(\lambda I - T_f)^{-1}\|
\end{align*}
It follows from Brown and Halmos' characterization of normal Toeplitz operators in \cite{Dou98} that $\lambda I - T_f$ is normal. The inverse of a normal operator is also normal, so $(\lambda I - T_f)^{-1}$ is normal. Now, observe that 
\[\|(\lambda I - T_f)^{-1}\| = \rho((\lambda I - T_f)^{-1}) = \frac{1}{\dist(\lambda,\sigma( T_f))} = \frac{1}{\dist(\lambda, \sigma(A))}\]
By substitution, 
\[P(A)\leq \sup_{|\lambda|>1} \frac{1+|\beta|}{1-|\beta|}(\dist(\lambda, \sigma(A)))\cdot \frac{1}{\dist(\lambda, \sigma(A))} = \frac{1+|\beta|}{1-|\beta|}\]
\end{proof}
\section{Stability Analysis of Methods for the Numerical Solution of the Cauchy Problem for Linear Differential Equations}
Several classes of nonlinear dynamical systems, such as time delays \cite{JOR19}, arise frequently in the natural sciences and engineering yet are difficult to model. Therefore, the stability analysis of numerical approximations for the differential equations used to model these systems is critical. Proving such stability results requires demonstrating the power-boundedness of an operator that results from numerical approximations \cite{BS00, DKS93, Spi97}. As a result, much recent work has gone into algorithmic approaches for accurate computation of the Kreiss constant (e.g. \cite{AN20}, \cite{Mit20}) and into sharpening the analytic conditions required for power-boundedness (e.g. \cite{BM21}, \cite{CCEL20}). Proofs of the numerical stability of approximation methods for particular examples can be found in \cite{DKS93} and \cite{Spi97} for linear initial value problems and \cite{RT90} for the method of lines approach, which is a spectral method (as opposed to a finite difference method) that reduces partial differential equations in space and time to a system of ordinary differential equations in time. 

Numerical methods for solving linear initial value problems produce equations that can typically be reduced to the following recursive numerical process:
\begin{equation}\label{three}
   u_n = Bu_{n-1} + b_n
\end{equation}
where the $b_n \in \mathbf{C}^s$, $B \in \mathcal{L}(\mathbf{C}^s)$, and the $u_n \in \mathbf{C}^s$ are computed recursively starting from an initial guess $u_0$. We are concerned with determining whether the \textit{propagated error} $v_n= \Tilde{u}_n - u_n$, resulting from a perturbed initial vector, is finite. Symbolically, suppose we start with $\Tilde{u_0}$ instead of $u_0$. By applying \ref{three}, we have   
\[v_n = \Tilde{u}_n - u_n  = (B\Tilde{u}_{n-1} + b_n ) - (Bu_{n-1} + b_n) = Bv_{n-1}\]
so $v_n = B^n v_0$. Thus, for arbitrary $v_0 \in \mathbf{C}^s$, the sharpest possible bound is $|v_n| \leq \|B^n\| \cdot |v_0|$. Clearly, the propagated error, and therefore the stability of the numerical approximation, depends on the uniform boundedness of $\|B^n\|$ for all $n$, that is, $M(B) <+\infty$.

In this paper, we analyze the power boundedness of composites of Toeplitz operators because such operators arise naturally in the numerical solution of intial value problems (c.f. Section 3 of \cite{Spi97}) and in differential equation-based models in science and engineering. Our results demonstrate the utility of reproducing kernel techniques in sourcing power-bounded operators that guarantee the numerical stability of approximation schemes. This work can help expand numerical methods to broader classes of differential equations and aid the development of novel numerical approximation schemes.  

\bmhead{Acknowledgments}

The author would like to thank Dr. Edward Timko for introducing him to this problem and mentoring him during his visit at the Georgia Institute of Technology School of Mathematics.

\end{document}